\renewcommand{\emph}[1]{\textit{#1}}
\def\nn{\nonumber}
\definecolor{brown}{cmyk}{0, 0.72, 1, 0.45}
\definecolor{grey}{gray}{0.5}
\newcommand{\old}[1]{}
\newcounter{rot}
\newcommand{\card}[1]{\left|#1\right|}
\newcommand{\ignore}[1]{}
\newcommand{\set}[1]{\left\{#1\right\}}
\def\cP{\mathcal{P}}
\def\cQ{\mathcal{Q}}
\def\ii_(#1,#2){i_{#1}^{#2}}
\def\bx{{\bf x}}
\def\d{\delta}
\def\e{\varepsilon}
\def\f{\phi}
\def\F{\Phi}
\def\g{\gamma}
\def\G{\Gamma}
\def\l{\lambda}
\def\n{\nu}
\def\p{\pi}
\def\P{\Pi}
\def\r{\rho}
\def\s{\sigma}
\def\up{\upsilon}
\def\1{{\bf 1}}
\def\0{{\bf 0}}
\def\cE{\mathcal{E}}
\def\cT{\mathcal{T}}
\newcommand{\brac}[1]{\left( #1 \right)}
\def\E{{\bf E}}
\renewcommand{\Pr}{\operatorname{\bf Pr}}
\newcommand\bfrac[2]{\left(\frac{#1}{#2}\right)}
\def\bx{{\bf x}}
\def\bd{{\bf d}}
\def\2G{{\sc 2greedy}}
\newcommand{\nospace}[1]{}
\def\path{\operatorname{PATH}}
\def\bd{{\bf d}}
\newtheorem{theorem}{Theorem}[section]
\newtheorem{lemma}[theorem]{Lemma}
\newtheorem{corollary}[theorem]{Corollary}
\newtheorem{remthm}[theorem]{Remark}
\newcounter{thmtemp}
\def\gnm3{G_{n,m}^{\delta\geq 3}}
\def\Gnm3{{\mathcal G}_{n,m}^{\delta\geq 3}}
\newcommand{\beq}[2]{\begin{equation}\label{#1}#2\end{equation}}
\def\G{\Gamma}
\def\cG{{\mathcal G}}
\def\cH{{\mathcal H}}
\def\ec{3}
\def\ecec{6}
\newcommand{\multstar}[1]{\begin{multline*}#1\end{multline*}}
\def\bsx{\boldsymbol \xi}
\begin{document}
\title{A scaling limit for the length of the longest cycle in a sparse random graph}
\author{ Michael Anastos\thanks{Department of Mathematics and Computer Science, Freie Universit{\"a}t Berlin, Berlin, Germany, email:{\bf manastos@zedat.fu-berlin.de}} and Alan Frieze\thanks{Department of Mathematical Sciences, Carnegie Mellon University, Pittsburgh PA, U.S.A. email:{\bf alan@random.math.cmu.edu}; the author is supported in part by NSF Grant DMS1363136} }

\maketitle
\begin{abstract}
We discuss the length of the longest cycle in a sparse random graph $G_{n,p},p=c/n$. $c$ constant. We show that for large $c$ there exists a function $f(c)$ such that $L_{c,n}/n\to f(c)$ a.s. The function $f(c)=1-\sum_{k=1}^\infty p_k(c)e^{-kc}$ where $p_k$ is a polynomial in $c$. We are only able to explicitly give the values $p_1,p_2$, although we could in principle compute any $p_k$. We see immediately that the length of the longest path is also asymptotic to $f(c)n$ w.h.p. 
\end{abstract}
\section{Introduction}
There are several basic questions that can be asked in the context of a class of graphs. E.g. what is the chromatic number? Is the graph Hamiltonian? Another such basic question is the following: how long is the longest cycle? In this paper we study this question in relation to the sparse random graph $G_{n,p},p=c/n$ for a constant $c>0$. Thus, let $L_{c,n}$ denote the length of the longest cycle in the random graph $G_{n,c/n}$. Erd\H{o}s \cite{Erdos1} conjectured that if $c>1$ then w.h.p. $L_{c,n}\geq \ell(c)n$ where $\ell(c)>0$ is independent of $n$. This was proved by Ajtai, Koml\'os and Szemer\'edi \cite{AKS} and in a slightly weaker form by de la Vega \cite{Vega} who proved that if $c>4\log 2$ then $f(c)=1-O(c^{-1})$. See also Suen \cite{Suen}. Although this answered Erd\H{o}s's question it only gives us a lower bound for the length of the longest cycle. Bollob\'as \cite{Bopath1} realised that for large $c$ one could find a large path/cycle w.h.p. by concentrating on a large subgraph with large minimum degree and demonstrating Hamiltonicity. In this way he showed that $\ell(c)\geq 1-c^{24}e^{-c/2}$. This was then improved by Bollob\'as, Fenner and Frieze \cite{BFF2} to $\ell(c)\geq 1-c^6e^{-c}$ and then by Frieze \cite{Fpath} to $\ell(c)\geq 1-(1+\e_c)(1+c)e^{-c}$ where $\e_c\to0$ as $c\to\infty$. This last result is optimal up to the value of $\e_c$, as there are w.h.p. $\approx (1+c)e^{-c}n$ vertices of degree 0 or 1. 

The basic open question to this point, is at to whether or not there exists a function $f(c)$ such that w.h.p. the $L_{c,n}=(1+\e_n)f(c)n$ where $\e_n\to0$ as $n\to 0$. And what is $f(c)$. In this paper we establish the existence of $f(c)$ for large $c$ and give a method of computing it to arbitrary accuracy. We note that this is one case of a fundamental extremal random variable where the existence of a scaling limit has not previously been shown to exist and does not appear to be susceptible to the interpolation method as in Bayati, Gamarnik and Tetali \cite{BGT}. 

Let $p=c/n$ and let $G=G_{n,p}$. We will assume throughout that $c$ is sufficiently large. Let $C_2$ denote the 2-core of $G$. By this we mean that part of the giant component consisting of vertices that are in at least one cycle. The longest cycle in $G$ is contained in $C_2$ and the length of the longest path in $G_{n,c/n}$ differs from this by $O(\log n)$ w.h.p. This will be for two reasons. The first reason is that we will establish a Hamiltonian subgraph of $C_2$ that contains the longest path in $C_2$ and the second reason for this is that w.h.p. the giant component of $G$ consists of $C_2$ plus a forest of trees with maximum diameter  $O(\log n)$.

As in the papers, \cite{Bopath1}, \cite{BFF2} and \cite{Fpath} we consider a process that builds a large Hamiltonian subgraph. We construct a sequence of sets $S_0=\emptyset,S_1,S_2,\ldots,S_L\subseteq C_2$ and their  induced subgraphs $\G_0, \G_1,\G_2,\ldots,\G_L$. Suppose now that we have constructed $S_\ell$, $\ell \geq 0$. We construct $S_{\ell+1}$ from $S_{\ell}$ via one of two cases: 

{\bf Construction of $\G_L$}\\
{\bf Case a:} If there is $v\in S_\ell$ that has exactly one or two neighbors $W$ in $ C_2 \setminus S_\ell$, then we add $W$ to $S_\ell$ to make $S_{\ell+1}$. \\
{\bf Case b:} If there is a vertex $v\in C_2\setminus S_\ell$ 
that has at most two neighbors in $C_2\setminus S_\ell$
then we define $S_{\ell+1}$ to be $S_\ell$ plus $v$ plus  the neighbors of $v$ in $C_2 \setminus S_\ell$.

$S_L$ is the set we end up with when there are no more vertices to add. We note that $S_L$ is well-defined and does not depend on the order of adding vertices. Indeed, suppose we have two distinct outcomes $O_1= v_1,v_2,\ldots,v_r$ and $O_2= w_1,w_2.,\ldots,w_s$. Assume without loss of generality that there exists $i$ which is the smallest index such that $w_i\notin O_1$. Then, $X=\set{w_1,w_2,\ldots,w_{i-1}}\subseteq Y=\set{v_1,v_2,\ldots,v_r}$. If $w_i$ was added in Step a as the neighbor of $v\in S_\ell=X$ then $v\in Y$  and $v$ has at most two neighbors in $C_2\setminus Y$. This contradicts the fact that $w_i\notin Y$. Suppose then that $w_i$ is added in Step b. If $w_i=v$ then it has at most two neighbors in $C_2\setminus X$ and hence it has at most two neighbors in $C_2\setminus Y$. This contradicts the fact that $w_i\notin Y$. If $w_i$ is the neighbor of $v\in X\subseteq Y$ then we get the same contradiction. It follows that $\set{w_1,w_2.,\ldots,w_s}\subseteq \set{v_1,v_2,\ldots,v_r}$ and vice-versa, by the same reasoning.

We will argue below in Section \ref{sizes} that w.h.p. the graph $\G_L$ induced by $S_L$ is a forest plus a few small components. Each tree in $\G_L$ will w.h.p. have at most $\log n$ vertices. For a tree component $T$ let 
\[
\upsilon_0(T)\text{ denote the set of vertices of $T$ that have no neighbors outside $S_L$}.
\]
{\bf Notation 1:} Let $\cT$ denote the set of trees in $\G_L$. For a tree $T\in \cT$ let $\cP_T$ be the set of vertex disjoint path packings of $T$ where we allow only paths whose start- and end- vertex are have neighbors in $C_2 \setminus V(T)$.
Here we allow paths of length 0, so that a single vertex with neighbors in $C_2 \setminus V(T)$ counts as a path. For $P\in \cP_T$ let $n(T,P)$ be the number of vertices in $T$ that are not covered by $P$. Let $\f(T)=\min_{P\in \cP_T} n(T,P)$ and $\cQ(T)\in \cP$ denote a set of paths that leaves $\f(T)$ vertices of $T$ uncovered i.e. satisfies $n(T,Q(T))=\f(T)$.

If $A=A(n), B=B(n)$ then we write $A\approx B$ if $A=(1+o(1))B$ as $n\to\infty$.

We will prove
\begin{theorem}\label{th1}
Let $p=c/n$ where $c>1$ is a sufficiently large constant. Then w.h.p.
\beq{sizeC}{
L_{c,n}\approx |V(C_2)|-\sum_{T\in \cT}\f(T).
}
\end{theorem}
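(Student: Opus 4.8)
The plan is to prove a matching upper and lower bound for $L_{c,n}$; only the lower bound needs the ``$\approx$''.

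\textbf{Upper bound.} This part is deterministic given the structure of $\G_L$ from Section~\ref{sizes} (w.h.p.\ $\G_L$ is a forest together with components of total size $o(n)$). Every cycle of $G$ lies in $C_2$, so let $C\subseteq C_2$ be a cycle and $T\in\cT$. Since $T$ is acyclic, $C\not\subseteq T$, so the part of $C$ lying inside $T$ is a disjoint union of sub-paths of $C$, each of which is a path of $T$ (because $T$ is a component of $G[S_L]$, any edge of $C$ with both ends in $V(T)$ lies in $T$), and by maximality each of these sub-paths has both endpoints incident along $C$ to a vertex of $C_2\setminus V(T)$. Hence this collection lies in $\cP_T$, so $C$ meets $V(T)$ in at most $|V(T)|-\f(T)$ vertices. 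Adding $|V(C)\cap(C_2\setminus S_L)|\le|C_2\setminus S_L|$, the trivial bound on the non-tree components of $\G_L$, and $|V(C_2)|=|C_2\setminus S_L|+\sum_{T\in\cT}|V(T)|+(\text{non-tree part})$, we get $|C|\le|V(C_2)|-\sum_{T\in\cT}\f(T)$.

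\textbf{Lower bound.} I must exhibit a cycle of length $\ge|V(C_2)|-\sum_{T\in\cT}\f(T)-o(n)$. The graph $K:=G[C_2\setminus S_L]$ has minimum degree $\ge 3$, since any vertex with at most two neighbours in $C_2\setminus S_L$ would have been absorbed in Case~b. I would then argue that, conditioned on $C_2$ and on the peeling, $K$ is distributed essentially like a random graph of minimum degree $\ge 3$ on $\Theta(n)$ vertices and $\Theta(n)$ edges --- using that $C_2$ is (essentially) such a random graph with the weaker truncation ``degree $\ge 2$'', and that $\f(T)$ and $\cQ(T)$ depend on $C_2$ only through the trees of $\G_L$ together with the information of which of their vertices have a neighbour in $K$, and not through $K$ itself. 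Fix an optimal packing $\cQ(T)$ for each $T$, and let $M$ be the subgraph of $C_2$ induced by $C_2\setminus S_L$ together with, for each $T$, the vertices of $T$ covered by $\cQ(T)$; then $M$ contains $K$, the covered part of each $T$ decomposes into the packing paths of $\cQ(T)$ (whose endpoints have neighbours in $K$), and $|V(M)|=|V(C_2)|-\sum_{T\in\cT}\f(T)-o(n)$. It suffices to find a near-spanning cycle of $M$. Now the degree-$2$ vertices of $M$ form vertex-disjoint paths, each attached at its two ends to vertices of degree $\ge 3$; suppressing these paths produces a multigraph $\hat M$ of minimum degree $\ge 3$ together with a linear forest $F$ of suppressed edges, and a Hamilton cycle of $\hat M$ through $F$ pulls back to a Hamilton cycle of $M$. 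The existence of such a Hamilton cycle of $\hat M$ is supplied by a Hamiltonicity result for $\Gnm3$-type random graphs, made robust enough to prescribe the linear forest $F$.

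The subtlety, and \emph{the main obstacle}, is that the packings $\cQ(T)$ must be chosen \emph{compatibly} with how the trees sit on $K$. If a single vertex of $K$ is the only $K$-neighbour of the chosen endpoints of three or more packing paths --- necessarily belonging to distinct trees --- it cannot be cycle-adjacent to all of them, so at least one packing path is lost; likewise for a packing path both of whose endpoints have a common unique $K$-neighbour, and for packing paths that would create short cycles in $F$. One must therefore select the optimal packings adaptively, as part of a global matching-type argument, so that the total size of these clashes and degeneracies is $o(n)$, while also establishing the distributional claim about $K$ precisely enough for the Hamiltonicity input to go through. These two points --- a compatible choice of optimal packings with only $o(n)$ loss, and the distributional control of $K$ --- carry the weight of the argument; the remaining first-moment estimates bounding the degenerate configurations are routine.
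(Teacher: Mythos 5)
Your overall framework coincides with the paper's: the upper bound is the same deterministic observation that the trace of a cycle inside a tree $T\in\cT$ is a member of $\cP_T$, so at most $|V(T)|-\f(T)$ vertices of $T$ can be used; and your lower bound, fixing an optimal packing $\cQ(T)$ for each $T$, contracting each packing path of length $\geq 1$ to an edge, and seeking a Hamilton cycle through the resulting prescribed edges, is exactly the paper's construction of $\G^*$ and $M^*$ and its claim \eqref{claim*}. Your multigraph $\hat M$ is the paper's $\G^*$ and your linear forest $F$ is the paper's $M^*$.

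Where your proposal goes astray is in identifying the ``main obstacle.'' The compatibility/clash problem you describe does not arise, for two reasons you have the machinery to see but did not use. First, by property {\bf G2} every vertex of $V_2$, in particular every endpoint of a packing path, has at least $\ec=3$ neighbours in $V_1$; so no endpoint can have a unique $K$-neighbour, and the scenario of three packing-path endpoints all tied to one $K$-vertex is not lethal (any vertex of $V_1$ only needs to be cycle-adjacent to two of its neighbours, and the other endpoints have $\geq 2$ alternative $V_1$-neighbours each). Second, the packing paths within a given $T$ are vertex-disjoint and distinct trees are disjoint, so $F=M^*$ is a \emph{matching}, not a general linear forest; there are no short cycles to worry about. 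Consequently no adaptive selection of the $\cQ(T)$ is needed --- the paper simply fixes any optimal packing per tree and proceeds.

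The part you label as ``supplied by a Hamiltonicity result for $\Gnm3$-type random graphs, made robust enough to prescribe the linear forest $F$'' is where the real work lives, and it is not an off-the-shelf citation: $\G^*$ is not itself a $\Gnm3$-type graph (it is $\G_1^*\cup\G_2^*\cup M^*$, with the bipartite part $\G_2^*$ attaching the arbitrary-degree vertices $V_2^*$), and no existing theorem gives Hamiltonicity through a prescribed matching in this setting. The paper proves it from scratch: it partitions the randomness of $\G^*_1$ into blue (expansion) and green (switching) subgraphs, restricts P\'osa rotations to \emph{acceptable} ones whose pivots lie in $V_1$ so that compatibility with $M^*$ is preserved and the rotation graph stays inside $\G_b\subseteq G_{n,p}$, establishes the expansion Lemma~\ref{explem1} for $\G_b^*$, and closes with the Fenner--Frieze double-counting inequality \eqref{A1}--\eqref{A2} together with a configuration-model estimate \eqref{configx} for the green graph. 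You should replace the appeal to an external Hamiltonicity theorem with this (or an equivalent) argument; the first-moment estimates you call ``routine'' are not the hard part.
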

The size of $C_2$ is well-known. Let $x$ be the unique solution of $xe^{-x}=ce^{-c}$ in $(0,1)$. Then w.h.p. (see e.g. \cite{FK}, Lemma 2.16),
\begin{align}
|C_2|&\approx (1-x)\brac{1-\frac{x}{c}}n.\label{C2v}\\
|E(C_2)|&\approx\brac{1-\frac{x}{c}}^2\frac{c}{2}n.\label{C2e}
\end{align}
Equation (4.5) of Erd\H{o}s and R\'enyi \cite{ER} tells us that 
\beq{xval}{
x=\sum_{k=1}^\infty\frac{k^{k-1}}{k!}(ce^{-c})^k=ce^{-c}+c^2e^{-2c}+O(c^3e^{-3c}).
}
We will argue below that w.h.p., as $c$ grows, that
\beq{small}{
\sum_{T\in \cT}\f(T)=O(c^{6}e^{-3c})n.
}
We therefore have the following improvement to the estimate in \cite{Fpath}.
\begin{corollary}\label{cor1} 
W.h.p., as $c$ grows, that
\beq{sizeC1}{
L_{c,n}\approx \brac{1-(c+1)e^{-c}-c^2e^{-2c}+ O(c^{6}e^{-3c})}n.
}
\end{corollary}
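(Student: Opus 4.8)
\emph{Proof idea.} The plan is to read the corollary off directly from Theorem~\ref{th1}, the standard first-order description of the $2$-core in \eqref{C2v}, and the bound \eqref{small}; the only actual computation is a Taylor expansion in the small parameter $ce^{-c}$, carried just far enough to control the error to order $c^6e^{-3c}$.

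First I would apply Theorem~\ref{th1}, which gives w.h.p. $L_{c,n}\approx|V(C_2)|-\sum_{T\in\cT}\f(T)$, and substitute the two w.h.p.\ estimates \eqref{C2v} and \eqref{small}. Since a product of finitely many $(1+o(1))$ factors multiplying a quantity of order $n$ is again $(1+o(1))$ times that quantity, this yields w.h.p.
\[
L_{c,n}\approx\Big((1-x)\big(1-\tfrac{x}{c}\big)+O\big(c^6e^{-3c}\big)\Big)n .
\]

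Next I would expand $(1-x)(1-x/c)$ using the Erd\H{o}s--R\'enyi series \eqref{xval}: it gives $x=ce^{-c}+c^2e^{-2c}+O(c^3e^{-3c})$, hence $x/c=e^{-c}+ce^{-2c}+O(c^2e^{-3c})$, and multiplying out and collecting powers of $e^{-c}$ gives
\[
(1-x)\big(1-\tfrac{x}{c}\big)=1-(c+1)e^{-c}-c^2e^{-2c}+O\big(c^3e^{-3c}\big),
\]
the coefficient $-(c+1)$ of $e^{-c}$ arising from $-e^{-c}-ce^{-c}$, the coefficient $-c^2$ of $e^{-2c}$ from $-ce^{-2c}+ce^{-2c}-c^2e^{-2c}$, and every remaining product (the cross terms $c^2e^{-3c}$, $c^3e^{-4c}$, etc.) being absorbed into $O(c^3e^{-3c})$. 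Since $O(c^3e^{-3c})\subseteq O(c^6e^{-3c})$, this error is dominated by the one already coming from \eqref{small}, and \eqref{sizeC1} follows.

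There is no genuine obstacle in this argument: all of the difficulty is upstream, in Theorem~\ref{th1} and in establishing \eqref{small}. The one point requiring care is the interplay of the two limiting regimes, since ``$\approx$'' refers to $n\to\infty$ with $c$ fixed whereas the $O(\cdot)$ terms describe $c\to\infty$; one should therefore pass to $L_{c,n}/n$, absorb the multiplicative $(1+o(1))$ errors into an additive $o(1)$ against the $\Theta(1)$ quantity $(1-x)(1-x/c)+O(c^6e^{-3c})$, read off the large-$c$ expansion of that quantity, and only then re-multiply by $n$, rather than conflating the two.
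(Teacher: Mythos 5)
Your proof is correct and follows the same route the paper intends: substitute \eqref{C2v} and \eqref{small} into Theorem \ref{th1}, then expand $(1-x)(1-x/c)$ via \eqref{xval} and observe that the resulting $O(c^3e^{-3c})$ truncation error is dominated by the $O(c^6e^{-3c})n$ term from \eqref{small}. Your parenthetical remark about keeping the $n\to\infty$ asymptotics ($\approx$) separate from the $c\to\infty$ asymptotics ($O(\cdot)$) is exactly the right care to take, and the arithmetic ($-ce^{-2c}$ from $-x/c$ cancelling $+ce^{-2c}$ from $x^2/c$, leaving $-c^2e^{-2c}$) checks out.
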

Note the term $(c+1)e^{-c}$ which accounts for vertices of degree 0 or 1. In principle we can compute more terms than what is given in \eqref{sizeC1}. We claim next that there exists some function $f(c)$ such that the sum in \eqref{sizeC} is concentrated around $f(c)n$. In other words, the sum in \eqref{sizeC} has the form  $\approx f(c)n$ w.h.p.
\begin{theorem}\label{limit}
\begin{enumerate}[(a)]
\item There exists a function $f(c)$ such that for any $\epsilon>0$, there exists $n_\e$ such that for $n\geq n_\e$,
\beq{eq:expectation}{
\card{\frac{\E[L_{c,n}]}{n}-f(c)}\leq \epsilon.
}
\item
\[
\frac{L_{c,n}}{n}\to f(c)\ a.s.
\]
\end{enumerate}
\end{theorem}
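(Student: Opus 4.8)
The plan is to establish part~(a) by an Azuma-type concentration argument combined with a subadditivity/convergence argument on $\E[L_{c,n}]/n$, and then to bootstrap part~(b) from part~(a) via a Borel--Cantelli argument along a suitable subsequence.

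First I would prove concentration of $L_{c,n}$ around its mean. The quantity $L_{c,n}$ is a function of the $\binom n2$ independent edge-indicator variables of $G_{n,p}$, and changing a single edge alters the length of the longest cycle by at most a constant (in fact by at most $2$, since we can splice or delete an edge with only a bounded local repair). Hence by the bounded-differences (Azuma--Hoeffding) inequality, $\Pr(|L_{c,n}-\E[L_{c,n}]|\ge t)\le 2\exp(-t^2/(c_0 n))$ for an absolute constant $c_0$. Actually, to get a bound strong enough for the almost-sure statement it is cleaner to use the vertex-exposure martingale: $L_{c,n}$ changes by $O(1)$ when one vertex's incident edges are resampled (again a bounded local surgery on the extremal cycle), giving $\Pr(|L_{c,n}-\E[L_{c,n}]|\ge t)\le 2\exp(-t^2/(c_1 n))$. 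Taking $t=n^{2/3}$ shows $|L_{c,n}-\E[L_{c,n}]|\le n^{2/3}$ with probability $1-o(1)$, and in fact the tail is summable along $n_k=k^2$.

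Next I would show that $a_n:=\E[L_{c,n}]/n$ converges. The natural route is to show it is \emph{almost subadditive} in a suitable sense. By Theorem~\ref{th1}, w.h.p. $L_{c,n}\approx |V(C_2)|-\sum_{T\in\cT}\f(T)$, and each term on the right-hand side is a sum of contributions that are \emph{local} on $G_{n,p}$: the sizes $|V(C_2)|,|E(C_2)|$ have deterministic scaling limits by \eqref{C2v}--\eqref{C2e}, and $\sum_{T\in\cT}\f(T)$ is a sum over tree components $T$ of $\G_L$, each of size $\le\log n$ w.h.p., of a bounded-per-vertex functional $\f(T)$. Thus $\sum_{T\in\cT}\f(T)$ is, up to $o(n)$, a sum of i.i.d.-like local contributions whose per-vertex expectation converges as $n\to\infty$ (the local structure of $G_{n,c/n}$ around a random vertex converges to a Galton--Watson tree with offspring $\Pois(c)$, and the event that a vertex lies in a given small tree component of $\G_L$, together with the resulting value of $\f$, is determined by a bounded-depth exploration). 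By the Benjamini--Schramm / local-weak-convergence framework this forces $\E[\sum_{T\in\cT}\f(T)]/n$ to converge; combining with \eqref{C2v} and the $o(n)$ error from Theorem~\ref{th1} gives $a_n\to f(c)$ for some constant $f(c)$, which is part~(a).

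Finally, for part~(b): pick the subsequence $n_k=k^2$. By the concentration bound, $\Pr(|L_{c,n_k}-\E[L_{c,n_k}]|\ge n_k^{2/3})\le 2\exp(-n_k^{1/3}/c_1)$, which is summable in $k$; by Borel--Cantelli, almost surely $L_{c,n_k}/n_k\to f(c)$. To fill the gaps $n_k\le n<n_{k+1}$ I would use monotonicity-type control: $L_{c,n}$ and $L_{c,n+1}$ can be coupled (add one vertex with its $\Bin(n,p)$ edges) so that $|L_{c,n+1}-L_{c,n}|$ is stochastically bounded, and since $n_{k+1}-n_k=O(\sqrt n)=o(n)$, the extremal cycle lengths cannot drift by more than $o(n)$ across a block; a union bound over the $O(\sqrt{n_k})$ values of $n$ in the $k$-th block, using the same exponential tail, is still summable. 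Hence $L_{c,n}/n\to f(c)$ almost surely.

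The main obstacle I expect is the convergence of $a_n=\E[L_{c,n}]/n$ in part~(a): unlike the longest-path problem in fixed-degree models, here there is no clean superadditivity and the interpolation method does not apply (as the authors note), so one must genuinely exploit Theorem~\ref{th1} to reduce the global optimum to a sum of bounded-range local functionals and then invoke local weak convergence of $G_{n,c/n}$ to a $\Pois(c)$ Galton--Watson tree. Making the "$\f(T)$ is a convergent local functional" step rigorous — in particular handling the rare large tree components and the boundary effects where the exploration of $\G_L$ interacts with the rest of $C_2$ — is where the real work lies.
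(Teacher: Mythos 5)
Your high-level plan for part (a) --- reduce Theorem~\ref{th1}'s right-hand side to a sum of bounded-range local functionals $\f(T)$ and pass to a local limit --- is exactly the paper's strategy. The paper makes this rigorous by introducing a truncation at depth $k_1=k_1(\e,c)$, defining a local surrogate $f(v)$ for the per-vertex contribution $\phi(v)=\f(T)/|\up_0(T)|$, and bounding the number of vertices for which the truncation fails (large tree components of $\G_L$, and ``bad'' vertices with overly large BFS balls) by $\e n$. The expectation of $\sum_{v\text{ good}}f(v)$ is then computed explicitly via the random-sequence (configuration) model for the 2-core $G_{N,M}^{\delta\geq 2}$, giving the closed form $\r_{H,o_H}$ in \eqref{piH}, rather than appealing to general Benjamini--Schramm/Galton--Watson machinery. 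So on part (a) you have the right idea but leave precisely the quantitative step (controlling the truncation error and evaluating the limit) open, which the paper carries out in full.

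There is, however, a genuine gap in your concentration step. The longest-cycle length $L_{c,n}$ does \emph{not} satisfy the bounded-differences hypothesis needed for Azuma--Hoeffding, under either edge or vertex exposure. Deleting a single edge of a long cycle destroys that cycle outright, and there is no a priori ``bounded local repair'': if $G$ is a single cycle of length $m$ plus isolated vertices, then removing one edge drops $L$ from $m$ to $0$. Similarly, deleting a vertex from a cycle yields a path, not a shorter cycle. So the claim ``changing a single edge alters the length of the longest cycle by at most $2$'' is false, and the vertex-exposure variant fails for the same reason. One could try Talagrand-type or typical-bounded-differences arguments, but that is not what you invoke. The paper sidesteps this entirely: it applies Azuma not to $L_{c,n}$ but to the subgraph counts $\n(H)$ for $H\in\cH_\e$, viewed in the configuration model of the 2-core (where swapping two entries of a configuration changes $\n(H)$ by $O(1)$, so bounded differences genuinely holds), obtaining \eqref{nH}, and then pieces these together via a union bound over the finitely many (in $n$) local types $H$ to get $\Pr(|L_{c,n}-\E L_{c,n}|\geq \e n+n^{3/4})=O(e^{-\Omega(n^{1/5})})$. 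This bound is summable over \emph{all} $n$, so the subsequence $n_k=k^2$ and the gap-filling coupling you propose for part (b) are unnecessary; a direct Borel--Cantelli argument closes the proof.
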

We will prove Theorem \ref{limit} in Section \ref{seclimit}.
\subsection{Structure of $\G_L$:}\label{sizes}
We first bound the size of $S_L$. We need the following lemma on the density of small sets.
\begin{lemma}\label{lem1}
W.h.p., every set $S\subseteq [n]$ of size at most $n_0=n/10c^3$ contains less than $3|S|/2$ edges in $G_{n,p}$.
\end{lemma}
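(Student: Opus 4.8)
The plan is to use a first-moment (union bound) argument over all candidate sets $S$ together with all candidate edge sets of size $t = \lceil 3|S|/2 \rceil$ inside $S$. Fix $s \le n_0$ and let $t = \lceil 3s/2 \rceil$. The probability that some $s$-set spans at least $t$ edges is at most
\beq{unionbd}{
\sum_{s=3}^{n_0} \binom{n}{s}\binom{\binom{s}{2}}{t} p^t,
}
since we need at least three vertices before $3|S|/2$ can be an integer $\ge$ the number of edges in a connected-enough configuration; small values of $s$ are handled trivially (a set of size $\le 2$ has at most one edge). I would then bound $\binom{n}{s} \le (en/s)^s$ and $\binom{\binom{s}{2}}{t} \le (es^2/(2t))^t \le (es/3)^t$ using $t \ge 3s/2$, and substitute $p = c/n$.

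The key step is the arithmetic: the $s$-th summand is at most
\[
\brac{\frac{en}{s}}^s \brac{\frac{es}{3}}^{3s/2}\brac{\frac{c}{n}}^{3s/2}
= \left[ \frac{e^{5/2}}{3^{3/2}} \cdot c^{3/2} \cdot \frac{s^{1/2}}{n^{1/2}} \right]^s.
\]
Since $s \le n_0 = n/(10 c^3)$, we have $s^{1/2}/n^{1/2} \le 1/(10^{1/2} c^{3/2})$, so the bracketed quantity is at most $e^{5/2}/(3^{3/2} \cdot 10^{1/2}) < 1/2$ (a crude numerical check: $e^{5/2} \approx 12.2$, $3^{3/2} \approx 5.2$, $\sqrt{10} \approx 3.16$, giving $\approx 12.2/16.4 < 0.75$; one can sharpen the constant $10$ in $n_0$ or absorb the slack, and the $s\le 3$ boundary terms are $O(1/n)$ individually). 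Hence each term is at most $2^{-s}$, and in fact for $s$ bounded away from small values it is $o(1)$ times a geometric factor; summing over $s$ from a suitable constant onward gives a bound that is $o(1)$, while the finitely many small-$s$ terms each contribute $O(1/n)$.

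The main obstacle — really the only delicate point — is getting the constant right so that the geometric series in \eqref{unionbd} is summable and tends to $0$: one must verify that the base of the exponential stays below $1$ uniformly for all $s \le n_0$, which is exactly what the hypothesis $n_0 = n/(10c^3)$ is engineered to guarantee (the exponent $3$ on $c$ matches the power $c^{3/2}$ appearing squared, and the constant $10$ provides the needed slack against $e^{5/2}/3^{3/2}$). I would also remark that the statement is essentially tight in form: sets of size $\Theta(n/c^3)$ are the threshold at which density-$3/2$ subgraphs begin to appear in $G_{n,c/n}$, which is why this particular scale $n_0$ is the natural cutoff for the later structural analysis of $\G_L$.
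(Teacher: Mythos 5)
Your proposal is correct and follows essentially the same first-moment union bound as the paper, arriving at the identical per-size term $\left(\frac{e^{5/2}c^{3/2}s^{1/2}}{3^{3/2}n^{1/2}}\right)^s$. The only slip is the parenthetical claim that the base is $<1/2$ (your own arithmetic gives $\approx 0.74$), but since all that matters is that it stays bounded below $1$ while being $O(\sqrt{s/n})$ for small $s$, the sum is $o(1)$ exactly as in the paper's proof.
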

\begin{proof}
The expected number of sets invalidating the claim can be bounded by
\[
\sum_{s=4}^{n_0}\binom{n}{s}\binom{\binom{s}{2}}{3s/2}\bfrac{c}{n}^{3s/2}\leq \sum_{s=4}^{n_0}\brac{\frac{ne}{s}\cdot\bfrac{se}{3}^{3/2}\cdot\bfrac{c}{n}^{3/2}}^s=\sum_{s=4}^{n_0} \bfrac{e^{5/2}c^{3/2}s^{1/2}}{3^{3/2}n^{1/2}}^s=o(1).
\]
\end{proof}
Now consider the construction of $S_L$. Let $A$ be the set of the vertices with degree less than $D=100$ and let $S_0'=(A \cup N(A) )\cap S_L\subseteq S_L$. If we start with $S_0=S_0'$ and run the process for constructing $\Gamma_L$ then we will producee the same $S_L$ as if we had started with $S_0= \emptyset$. This is because, as we have shown, the order of adding vertices does not matter. Now w.h.p. there are at most $n_D=\frac{2c^De^{-c}}{D!}n$ vertices of degree at most $D$ in $G_{n,p}$, (see for example Theorem 3.3 of \cite{FK}) and so $|S_0'|\leq  Dn_D$. 

Now suppose that the process runs for another $k$ rounds. Then $S_{k}$ has a least $kD/2$ edges and at most $Dn_D+3k$ vertices. This is because round $k$ adds at most three {\em new} vertices to $S_k$ and the $k$ vertices that take the role of $v$ have degree at least $D$ and all of their neighbors will be in $S_k$. If $k$ reaches $4n_D$ then 
\[
\frac{e(S_k)}{|S_k|}\geq  \frac{4Dn_d}{2}\cdot \frac{1}{(D+12)n_d}>\frac32.
\]
So, by Lemma \ref{lem1}, we can assert that w.h.p. the process runs for less than $4n_D$ rounds and,
\beq{smallS}{
|V(\G_L)|\leq (D+4)n_D\leq ne^{-c/2}.
}
We note the following properties of $S_L$. Let 
\[
V_1=C_2\setminus S_L\text{ and }V_2=\set{v\in  S_L:\;v\text{ has at least one neighbor in }V_1} .
\]
Then, 
\begin{enumerate}[{\bf G1}]
\item Each vertex $v\in S_L \setminus V_2$ has no neighbors in $V_1$.
\item Each $v\in V_1 \cup V_2$ has at least $\ec$ neighbors in $V_1$.
\end{enumerate}
Given the definition of $V_2$, for $T\in \cT$ we can express $\up_0(T)$ as
\[
\up_0(T)=V(T)\setminus V_2.
\]

We will now show that w.h.p. each component $K$ of $\Gamma_L$  satisfies 
\beq{vK}{
|\up_0(K)|\geq 
 \frac{|V(K)|}{\ec}.
}
We will prove that for $0\leq i \leq L$ and each component $K$ spanned by $S_i$, 
\beq{vK1}{
|\up_{0,i}(K)|\geq 
 \frac{|V(K)|}{\ec}.
}
Here $v_{0,i}(K)$ is taken to be the number of vertices in $V(K)$ with no neigbors in $C_2\setminus K$. Taking $i=L$ in \eqref{vK1} yields \eqref{vK}. We proceed by an induction on $i$.

$S_0=\emptyset$ and so for $i=0$, \eqref{vK1} is satisfied by every component spanned by $S_0$. Suppose that at step $i=\ell$,  \eqref{vK1} is satisfied by every component spanned by $S_\ell$.

At step $\ell+1$, assume that $v$ invokes either Case a or Case b. In both cases  $S_{\ell+1}=S_\ell \cup \big(\{v\} \cup N(v)\big).$ The addition of the new vertices into $S_\ell$  could merge components $K_1,K_2,\ldots,K_r$  into one component $K'$ while adding at most $\ec$ vertices. Hence  $\ec+\sum_{j\in [r]} |K_i| \geq |K'|$. In addition every vertex that contributed to $v_{0,\ell}(K_j)$, $j=1,2,...,r$ now contributes towards $v_{0,\ell+1}(K')$.
Also $v$ has neighbors outside $S_{\ell}$ but no neigbors outside $S_{\ell+1}$. The inductive hypothesis implies that $\up_{0,\ell}(K_j) \geq |K_j|/\ec$ for $j\in [r]$. Thus,
\[
\up_{0,\ell+1}(K')\geq 1+\sum_{j \in [r]}\up_{0,\ell}(K_j) \geq 1+\frac{1}{ \ec}\sum_{j \in [r]}|K_j|\geq 1+\frac{|K'|-\ec}{\ec} = \frac{|K'|}{\ec}.
\]
and so \eqref{vK1} continues to hold for all the components spanned by $S_{\ell+1}$.

We next show that w.h.p., only a small component $K$ can satisfy \eqref{vK}. We consider $K$ in the context of $G_{n,p}$ in which case $K$ will have at least $|V(K)|/3$ vertices with no neighbors outside $K$. So, the expected number of components of size $k\leq ne^{-c/2}$ that satisfy this condition is at most
\begin{align}\label{11}
\binom{n}{k}k^{k-2}\bfrac{c}{n}^{k-1}\binom{k}{k/ \ec} (1-p)^{k(n-k)/\ec}&
\leq \bfrac{ne}{k}^kk^{k-2}\bfrac{c}{n}^{k-1}2^ke^{- ck/\ecec} \nonumber
\\&\leq \frac{n}{ck^2}\brac{2ce^{1-c/\ecec}}^k=o(n^{-2}),
\end{align}
if $c$ is large and $k\geq \log n$.

So, we can assume that all components are of size at most $\log n$. Then the expected number of vertices on components that are not trees is bounded by
\begin{align*}
\sum_{k=3}^{\log n}\binom{n}{k}k^{k+1}\bfrac{c}{n}^{k}\binom{k}{k/\ec} (1-p)^{k(n-k)/\ec}&
\leq \sum_{k=3}^{\log n}\bfrac{ne}{k}^kk^{k+1}\bfrac{c}{n}^{k}(e^{-ck/\ecec})
\\&\leq \sum_{k=3}^{\log n}k\brac{2ce^{1-c/\ecec}}^k=O(1).
\end{align*}
Markov's inequality implies that w.h.p. such components span at most $\log n=o(n)$ vertices.
\vspace{3mm}
\\{\bf Notation 2:}
For $T\in\cT$,  let $M_T$ be the matching on $V_2$ obtained by replacing each path of $\cQ(T)$ of length at least 1 by an edge and let $M^*=\bigcup_{T\in \cT}M_T$.  Let $I(T)$ denote the internal vertices of the paths $\cQ(T)$ and $I^*=\bigcup_{T\in \cT}I_T^*$ and $V_2^*=V_2\setminus I^*$. We let $\G^*_1$ be the subgraph of $G$ induced by $V_1$. We also let $ \G^*_2 $ be the bipartite graph with vertex partition  $V_1,V_2^*$ and all edges $\{e \in E(G):e \in V_1 \times V_2^*\}$. Finally let $\G^*=\G_1^*\cup \G^*_2\cup M^*$ and $V^*=V_1\cup V_2^*=V(\G^*)$. 
\section{Proof of Theorem \ref{th1}}
The RHS of \eqref{sizeC}, modulo the $o(n)$ number of vertices that are spanned by non tree components in $\G_L$, is clearly an upper bound on the largest cycle in $C_2$. Any cycle must omit at least $\f(T)$ vertices from each $T\in\cT$. On the other hand, as we show, w.h.p. there is cycle $H$ that spans $V_1\cup \bigcup_{T\in \cT}V(\cQ(T))$ (see Notation 1). The length of $H$ is equal to the RHS of \eqref{sizeC}. Equivalently, we show that 
\beq{claim*}{
\text{w.h.p. there is a Hamilton cycle $H^*$ in $\G^*$ that contains all the edges of $M^*$.}
}

\subsection{Proof of \eqref{small}}
We are not able at this time to give a simple estimate of $\sum_{T\in\cT}\f(T)$ as a function of $c$. We will have to make do with \eqref{small}. On the other hand, $\sum_{T\in\cT}\f(T)$ can be approximated to within arbitrary accuracy, using the argument in Section \ref{seclimit}. 

We work in $G_{n,p}$. Observe that $T$ must have a vertex of degree three in order that $\f(T)>0$. The smallest such tree has seven vertices and consists of three paths of length two with a common 
endpoint. (If $T$ is a star of degree 3 for example, it can be covered by a path of length 2 that covers the central vertex and a path of length 0. Here we are using that every vertex in $V(T) \setminus V_2 \subset C_2$ must have degree at least 2, hence every vertex of $T$ of degree 1 belongs to $V_2$ and has neighbors in $C_2 \setminus V(T)$.) Therefore, in $G_{n,p}$,
\begin{align}
\E\brac{\sum_{T\in \cT}\f(T)}&\leq  \sum_{k\geq 7} k \cdot \binom{n}{k} k^{k-2}p^{k-1}(1-p)^{(n-k)\max\set{3,k/\ec}}\nn\\
&\leq  \sum_{k\geq 7}\bfrac{ne}{k}^k k^{k-1}\bfrac{c}{n}^{k-1}\exp\set{-c\max\set{3,k/\ec}}\nn\\
&=O(c^{6}e^{-3c})n,\label{few}
\end{align}
At the first line we used that every tree that contributes to
$\E\brac{\sum_{T\in \cT}\f(T)}$ must satisfy $v_0(T)>2$. In addition \eqref{vK} states that $v_0(T) \geq |T|/\ec$.  
We obtain \eqref{small} from \eqref{few}.
\subsection{Structure of $\G_1^*$}
Suppose now that $|V_1|=N$ and that $V_1$ contains $M$ edges. The construction of $\G_L$ does not involve the edges inside $V_1$, but we do know that that $\G_1^*$ has minimum degree at least $\ec$. The distribution of $\G_1^*$ will be that of $G_{V_1,M}$ subject to this degree condition, viz. the random graph $G_{V_1,M}^{\delta\geq\ec}$ which is sampled uniformly from the set $\cG_{V_1,M}^{\delta\geq\ec}$, the set of graphs with vertex set $V_1$, $M$ edges and minimum degree at least $\ec$. This is because, we can replace $\G_1^*$ by any graph in $G_{V_1,M}^{\delta\geq\ec}$ without changing $\G_L$. By the same token, we also know that each  $v\in V_2^*$ has at least $\ec$ random neighbors in $V_1$. We have that
\beq{MNsize}{
N\geq n(1-2e^{-c/2})\text{ and }M\in \frac{(1\pm\e_1)cN}{2},
}
where $\e_1=c^{-1/3}$. The bound on $N$ follows from \eqref{C2v} and \eqref{smallS} and the bound on $M$ follows from the fact that in $G_{n,p}$,
\[
\Pr\brac{\exists S:|S|=N,e(S)\notin(1\pm\e_1)\binom{N}{2}p}\leq 2\binom{n}{N}\exp\set{-\frac{\e_1^2N(N-1)p}{3}}=o(1).
\]
\subsection{Partitioning/Coloring $G=G_{n,p}$}
We will use the edge coloring argument of Fenner and Frieze \cite{FF} to verify \eqref{claim*}. In this section we describe how to color edges.

We color most of the edges of $G$ light blue, dark blue or green. We denote the resultant blue and green subgraphs by $\G_b, \G_g$ respectively (an edge is blue if it is either dark or light blue). We later show that the blue graph has  expansion properties while the green graph has suitable randomness.

Every vertex $v\in V_1$ independently chooses  $\min\set{\deg_{V_1}(v),100}$ neighbors in $V_1$ and we color the chosen edges light blue. Then we color every edge in $V_2^*:V_1$ light blue. Thereafter we independently color (re-color) every edge of $G$ dark blue with probability $1/2000$.  Finally we color green all the uncolored edges that are contained in $V_1$. (Some of the edges of $G$ will remain uncolored and play no significant role in the proof.)

The above coloring satisfies the following properties:
\begin{enumerate}[{\bf (C1)}]
\item Every vertex in  $V_1\cup V_2^*$ is joined to at least $\ec$ vertices in $V_1$ by a blue edge.
\item Every dark blue edge appears independently with probability $\frac{p}{2000}$.
\item Given the degree sequence $\bd_g$ of $\G_g$, every graph $H$ with vertex set $V_1$ and degree sequence $\bd_g$ is equally likely to be $\G_g$.  
\end{enumerate}

We can justify {\bf C3} as follows: Amending $G$ by replacing $\G_g$ by any other graph $G'$ with vertex set $V_1$ and the same degree sequence and executing our construction of $S_L$ will result in the same set $S_L$ and sets $V_1,V_2^*$. So, each possible $G'$ has the same set of extensions to $G_{n,p}$ and as such is equally likely.

Now given $\G_b, \G_g \subset G$ we color the edges in $\G^*$ as follows. Every edge in $\G^*$ that exists in $G$ inherits its color from the coloring in $G$. Every edge in $M^* \subseteq E(\G^*)$ is colored blue. We let $\G_b^*,\G_g^*$ be the blue and the green subgraphs of $\G^*$. Observe that $\G_g^*= \G_g$, hence $\G_g^*$ satisfies property $(C3)$ as well.
\subsection{Expansion of $\G_b^*$}\label{expand}
We wish to estimate the probability that small sets have relatively few neighbors in the graph $\G_b^*$. For  $S\subseteq V^*= V_1\cup V_2^*$ we let 
\begin{align*}
N_b (S)&=\set{w\in V_1 \setminus S :\exists v\in S \text{ with }\set{v,w}\in E(\G_b^*)}
\\&=\set{w\in V_1 \setminus S :\exists v\in S \text{ with }\set{v,w}\in E(\G_b)}
\end{align*}
We have slightly abused notation here since $N_b(S)$ is implicitly defined in both $G$ and $\G^*$.

It is shown in \cite{BCFF} and also in \cite{FPitt} that if $S$ is the set of endpoints created by P\'osa rotations (see Section \ref{Posa}) that $S\cup N(S)$ is connected and contains at least two distinct cycles hence, at least $|S|+ |N(S)|+1$ edges. Hence the condition (iii) in the following lemma.

\begin{lemma}\label{explem1}
W.h.p. there does not exist $S \subset V^*$ of size $|S| \leq n/4$ such that (i) $|N_b(S)| \leq 2|S|$, (ii) $S\cup N_b(S)$ is connected in $\G_b\subseteq G_{n,p}$ and (iii) $S\cup N_b(S)$ spans at least $|S|+|N_b(S)|+1$ edges in $\Gamma_b \subseteq G_{n,p}$.
\end{lemma}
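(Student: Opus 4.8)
The plan is to bound the expected number of ``bad'' sets $S$ by a first-moment computation, summing over the size $s = |S|$, the size $t = |N_b(S)|$, and the number of edges spanned, and then to show this expectation is $o(1)$. Since the light-blue edges inside $V_1$ are a subset of the edges of $G_{n,p}$ and the dark-blue edges appear with probability $p/2000$, it suffices to work with $G_{n,p}$ itself (edges of $M^*$ only help connectivity and contribute a bounded amount to the edge count, and can be absorbed): a bad configuration in $\G_b^*$ forces a dense, connected, small set in $G_{n,p}$ on $s+t$ vertices. The key point is that conditions (ii) and (iii) together say that $S \cup N_b(S)$ spans a connected subgraph on $s+t$ vertices with at least $(s+t)+1$ edges, i.e. a connected graph with at least two independent cycles — this excess of $+1$ over a spanning tree is what makes the count summable.

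First I would set $u = s + t \le 3n/4$ and write the expected number of vertex sets $U$ of size $u$ that are connected and span at least $u+1$ edges in $G_{n,p}$ as at most
\[
\sum_{u} \binom{n}{u} u^{u-2} \binom{\binom{u}{2}}{2} p^{u+1},
\]
where $u^{u-2}$ counts spanning trees on $U$, $\binom{\binom{u}{2}}{2}$ picks the two extra edges, and $p^{u+1}$ is the probability all $u+1$ chosen edges are present. Using $p = c/n$ and $\binom{n}{u} \le (ne/u)^u$, the generic term is $O\!\big((ce)^u \cdot u^{2} \cdot (c/n)\big)$ up to constants, which is $O(u^2 (ce)^u / n)$. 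This is geometrically \emph{increasing} in $u$, so the dominant contribution comes from the largest $u$, namely $u = \Theta(n)$, and there $(ce)^u/n$ is exponentially large — so the naive bound fails for large $S$, exactly as one expects. The resolution, and the reason the hypothesis $|S| \le n/4$ (so $u \le 3n/4$) is present, is that for sets $U$ with $|U|$ a constant fraction of $n$, the bound $|N_b(S)| \le 2|S|$ is itself extremely unlikely: a random set $S$ of linear size has a linear-sized blue neighborhood with room to spare, by the expansion already present in {\bf (C1)} (minimum blue-degree $\ge 3$ into $V_1$) combined with the randomness of those blue edges. So the argument splits into two regimes.

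For the small regime, say $u \le u_0 = \delta n$ for a small constant $\delta$, the first-moment sum $\sum_{u \le u_0} O(u^2 (ce)^u/n)$ is dominated by its last term, which is $O(u_0^2 (ce)^{\delta n}/n)$; this is \emph{not} $o(1)$ either, so in fact I need to be more careful even here and use condition (i) — that $t \le 2s$, so that the blue edges incident to $S$ that land in $N_b(S)$ are squeezed into a small target set. Concretely, I would instead sum over $s$ and $t \le 2s$ separately and bound the probability that all of $S$'s at-least-$3s$ blue stubs fall into $S \cup N_b(S)$ (a set of size $\le 3s$), which carries a factor like $(3s/N)^{3s}$ or better; this beats the $\binom{n}{s}\binom{n}{t}$ entropy term whenever $s \ge \log n$, giving $o(1)$ for $\log n \le s \le \delta n$. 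For $s < \log n$ one uses instead that such a tiny set spanning $\ge |S|+|N_b(S)|+1 \ge 3s/?$ edges with the excess-one cycle structure is ruled out by a Lemma~\ref{lem1}-style density argument (indeed this is essentially the $k \le \log n$ case of \eqref{11}). The main obstacle is getting the bookkeeping in the intermediate regime right: one must simultaneously exploit the $+1$ edge excess from (ii)+(iii), the ``all blue stubs land in a $3s$-set'' constraint from (i), and the genuine randomness of the blue edges guaranteed by the coloring construction, and verify that for \emph{every} $s$ in $[\log n, n/4]$ the product of the entropy terms and these probability bounds is $n^{-\Omega(1)}$ so that the union over $s$ is $o(1)$. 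I would expect the large-$s$ part ($s \ge n/(\log\log n)$, say) to need a Chernoff bound on $|N_b(S)|$ rather than a crude union bound, which is the one place a slightly different tool is required.
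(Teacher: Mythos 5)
Your high-level diagnosis is right: a naive first moment over connected, edge-excess-one subsets fails once $|S|$ is linear in $n$, and conditions (i)--(iii) must be exploited. But the specific mechanism you reach for in the linear regime does not work, and it misses the device the paper actually built into the coloring.

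The paper's proof splits into $|S|\leq n/(100c^3)$ and $n/(100c^3)<|S|\leq n/4$. For small $S$ it does not run a uniform first-moment count; instead it splits on whether $S$ has at least $s/10$ vertices of degree $\geq 100$. If so, those vertices each contribute $100$ light-blue edges that all land in $S\cup N_b(S)$ (a set of size $\leq 3s$), giving $\geq 5s > \tfrac32(s+t)$ edges and a deterministic contradiction with Lemma~\ref{lem1}. If not, at least $9s/10$ vertices have degree $\leq 99$, and a direct first-moment count over connected sets with that many low-degree vertices is $o(1)$. Your proposed ``small $s$'' treatment via a Lemma~\ref{lem1}-style density argument alone only covers the trivial $s<\log n$ range and does not reach $s = \Theta(n/c^3)$; you need the degree-$100$ split.

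For linear $S$, the step you propose --- bounding the probability that ``all of $S$'s at-least-$3s$ blue stubs fall into a set of size $3s$'', giving a factor like $(3s/N)^{3s}$ --- has two problems. First, the arithmetic fails: against the entropy $\binom{n}{s}\binom{n}{t}\approx (ne/s)^s(ne/t)^t$ this gives $\bigl((3s/N)^3\cdot (ne/s)\cdot (ne/t)^{2}\bigr)^s$ which, for $s,t=\Theta(n)$, is exponentially \emph{large}, not small. (The $100$ light-blue edges, which would give a usable power, only exist for vertices of degree $\geq 100$, and the minimum guaranteed blue degree is $\ec=3$.) Second, the light-blue edges are not independent coins --- each vertex deterministically chooses its first few neighbors from the realized graph --- so one cannot condition and then ``re-randomize'' them the way the calculation implicitly requires. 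The correct source of independent randomness, and the whole reason for the dark-blue recoloring in the construction, is property~(C2): every edge of $G$ is dark blue independently with probability $p/2000$. If $S$ has blue neighborhood $T=N_b(S)$ and remaining $G$-neighborhood $W=N_G(S)\setminus N_b(S) \subseteq V_2^*$ (note $|W| \leq n_0$ from \eqref{smallS}, a bound you never use), then there is \emph{no} dark-blue edge from $S$ to $V\setminus(S\cup T\cup W)$, an event of probability $(1-p/2000)^{s(n-s-t-w)}\approx e^{-cs/10^5}$ over $s(n-s-t-w)=\Omega(sn)$ independent trials. This is the exponential-in-$sn$ factor that crushes the entropy term for $s\geq n/(100c^3)$. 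Your ``Chernoff bound on $|N_b(S)|$'' idea is trying to get at the same phenomenon, but without identifying the dark-blue edges as the independent randomness you cannot carry it out; so this is a genuine gap rather than a different route to the same bound.
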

\begin{proof}
Assume that the above fails for some set $S$. \\
{\bf Case 1: $|S|\leq n_1=n/(100c^3)$.}\\
Let $t=|N_b(S)|$. We will suppose first that $S$ contains at least $s/10$ vertices of degree at least 100. In this case $S\cup N_b(S)$ has cardinality at most $s+t\leq 3s$ and contains at least $5s>3(s+t)/2$ edges, contradicting Lemma \ref{lem1}.

On the other hand, if there are at least $9s/10$ vertices in $S$ of degree at most 99 then there are at least $3(s+t)/10$ vertices of degree at most 99 in a connected subgraph of size $s_0\leq s+t\leq 3n_1$. In addition that subgraph spans at least $s+t+1$. But the probability of this occuring in $G_{n,p}$ is at most
\multstar{
\sum_{k=1}^{3n_1}\binom{n}{k}k^{k-2}\binom{\binom{k}{2}}{2} p^{k+1}\binom{k}{3k/10} \brac{\sum_{\ell=1}^{99}\binom{n-k}{99}p^{\ell}(1-p)^{n-k-\ell}}^{3k/10}\\
\leq \sum_{k=1}^{3n_1}\bfrac{ne}{k}^kk^{k+2}\bfrac{c}{n}^{k+1}2^ke^{-3kc/20} 
\leq \sum_{k=1}^{3n_1} \frac{ck^2}{n} \cdot \brac{2ce^{1-3c/20}}^k=o(1).
}
This completes the proof for Case 1.

{\bf Case 2: $n_1<|S|\leq n/4$.}\\
The particular values for the sets $V_1,V_2^*$ condition $G_{n,p}$. To get round this, we describe a larger event $\cE_S$ in $G=G_{n,p}$ that (a) occurs as a consequence of there being a set $S$ with small expansion and (b) only occurs with probability $o(1)$. This event involves an arbitrary choice for $V_1,V_2^*$ etc.

Let $T=N_b(S)$ and $W =N_G(S) \setminus N_b(S)$, that is $T$ and $W$ are the neighborhood of $S$ inside and outside of $V_1$ respectively.  Then the following event $\cE_S$ must hold. There exist $S,T,W$ such that, where $s=|S|,t=|T|$ and $w=|W|$,
\begin{enumerate}[(i)]
\item $t \leq 2s$.
\item $w \leq n_0=ne^{-c/2}$, where $n_0$ is from \eqref{smallS}.
\item No vertex in $S$ is connected to a vertex in $V \setminus (S \cup T \cup W)$ by a dark blue edge.
\item $S \cup N_b(S)$ spans at least $s+t$ edges (at least s+t+1 in fact).
\end{enumerate}
Thus,
\begin{align*}
&\Pr(\cE_S\mid s,t,w)\nonumber\\
&\leq \binom{n}{s} \binom{n}{t}\binom{n}{w}  \binom{\binom{s+t}{2}}{s+t} s^w p^{s+t+w} \bigg(1-\frac{p}{2000}\bigg)^{s(n-s-t-w)} \nonumber
\\& \leq \bfrac{en}{s}^s \bfrac{en}{t}^t \bfrac{en}{w}^w  \bfrac{e(s+t)}{2}^{s+t} s^w  \bfrac{c}{n}^{s+t+w}     \exp\set{-\frac{p}{2000}\brac{\frac{sn}{5}}} \nonumber
\\& \leq (ec)^{2(s+t)}\bfrac{s+t}{2s}^s \bfrac{ s+t}{2t}^t \bfrac{ecs}{w}^w   \exp\set{-\frac{cs}{10^5}} \nonumber 
\\& \leq (ec)^{6s} \exp \set{ s \cdot \frac{t-s}{2s}} \exp \set{ t \cdot \frac{s-t}{2t}} 
 \bfrac{ecs}{n_0}^{n_0}    \exp\set{-\frac{cs}{10^5}} \nonumber 
\\& \leq  (ec)^{6s}  (ce^{1-c/3})^{se^{-c/3}}   \exp\set{-\frac{cs}{10^5}}   
= \bigg((ec)^6 (ce^{1-c/3})^{e^{-c/3}}  e^{-c/10^5} \bigg)^s .
\end{align*}
At the 5th line we used $\frac{s+t}{2s}=1+\frac{t-s}{2s}\leq\exp\set{\frac{t-s}{2s}}$ and $w\leq n_0\leq 100c^3e^{-c/2}s\leq e^{-c/3}s$. Hence
\[
\Pr(\exists S:\cE_S)\leq n\sum_{s=n/(100c^3)}^{n/4} \sum_{t=0}^{2s} \bigg((ec)^6(ce^{1-c/3})^{e^{-c/3}} e^{-c/10^5} \bigg)^s =o(1).
\]
\end{proof}

\subsection{The Degrees of the Green Subgraph}
\begin{lemma}\label{lem:green}
W.h.p. at least $99n/100$ vertices in $V_1$ have green degree at least $c/50$. In addition every set $S\subset V_1$ of size at least $n/4$ has total green degree at least $cn/250$.
\end{lemma}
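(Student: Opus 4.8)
The plan is to deduce the second assertion from the first and to establish the first by a union bound over a small ``bad'' set. For the reduction: granting that at least $99n/100$ vertices of $V_1$ have green degree at least $c/50$, at most $n/100$ vertices of $V_1$ have green degree less than $c/50$ (as $|V_1|\le n$), so any $S\subseteq V_1$ with $|S|\ge n/4$ contains at least $n/4-n/100=6n/25$ vertices of green degree at least $c/50$, and therefore the total green degree of $S$ is at least $(6n/25)(c/50)=3cn/625\ge cn/250$. Hence it suffices to prove the first assertion.

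For this I would work in $G=G_{n,p}$ with its colouring and call $v\in V_1$ \emph{bad} if one of the following holds: (i) $\deg_G(v)\notin[c/2,2c]$; (ii) $v$ has a neighbour in $V\setminus V_1$; (iii) at least $10^6$ neighbours of $v$ choose $v$ in the light-blue step; (iv) at least $c/100$ edges of $G$ at $v$ are coloured dark blue. If $v$ is not bad then all $\deg_G(v)\ge c/2$ of its edges lie inside $V_1$; among them at most $100+10^6$ are light blue (the at most $100$ chosen by $v$, plus those at the at most $10^6$ neighbours that chose $v$) and at most $c/100$ are dark blue, so at least $c/2-100-10^6-c/100$ of them survive the blue stages uncoloured and hence become green. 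For $c$ large this quantity exceeds $c/50$, so every vertex that is not bad has green degree at least $c/50$, and it remains to show that w.h.p.\ the bad set has size at most $n/100$.

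I would bound the four types in turn. For (i) and (iv) this is routine: $\Pr(\deg_G(v)\le c/2)\le e^{-c/8}$ and $\Pr(\deg_G(v)\ge 2c)\le e^{-c/3}$ by Chernoff, and since by property {\bf (C2)} each edge of $G$ is dark blue independently with probability $1/2000$, for a vertex with $\deg_G(v)\le 2c$ the number of dark blue edges at $v$ is dominated by $\operatorname{Bin}(2c,1/2000)$, which is at least $c/100$ only with probability $e^{-\Omega(c)}$; in each case the expected number of offending vertices is $ne^{-\Omega(c)}$, and the standard concentration of degree statistics of $G_{n,p}$ (cf.\ Theorem 3.3 of \cite{FK}) gives w.h.p.\ at most $n/400$ of them when $c$ is large. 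Type (iii) requires no randomness: each vertex chooses at most $100$ neighbours, so $\sum_{v\in V_1}|\{u\in V_1:u\text{ chooses }v\}|\le 100|V_1|\le 100n$, whence at most $n/10^4$ vertices are of type (iii). For (ii) the number of such vertices is at most $\sum_{u\in V\setminus V_1}\deg_G(u)$, and I would show this is $o(n)$: since $V\setminus V_1=(V\setminus C_2)\cup S_L$, with $|V\setminus C_2|\le 2ce^{-c}n$ w.h.p.\ by \eqref{C2v} and $|S_L|\le ne^{-c/2}$ by \eqref{smallS}, we have $|V\setminus V_1|\le ne^{-c/3}$ for $c$ large; then $\sum_{u\in V\setminus V_1}\deg_G(u)\le 2c|V\setminus V_1|+\sum_{u:\deg_G(u)>2c}\deg_G(u)$, and the last sum has expectation at most $cn\,\Pr(\operatorname{Bin}(n-1,p)\ge 2c)\le cne^{-c/3}$, hence is $o(n)$ w.h.p.\ by Markov's inequality. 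Adding the four bounds gives at most $n/100$ bad vertices w.h.p.

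The only genuinely delicate point is the type-(ii) estimate. Because $V_1=C_2\setminus S_L$ is a complicated, $G$-dependent set, it cannot be treated as fixed, so the bound on the number of edges leaving $V_1$ has to be routed through the crude facts that $V\setminus C_2$ and $S_L$ are each $o(n)$ together with the observation that, once the $o(n)$ vertices of degree exceeding $2c$ are set aside, $G_{n,p}$ has only $o(n)$ total degree on any such small vertex set. Everything else is comfortable, since the target $c/50$ is far below the typical green degree, which is $(1-o(1))c$.
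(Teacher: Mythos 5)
You take a genuinely different route from the paper. The paper's proof is a global averaging argument: it bounds the total number of light blue edges by $100n$, the total number of dark blue edges by $(1+\e)cn/4000$ w.h.p.\ via Chernoff, shows that all but $n/1000$ vertices have $G$-degree at least $c/4$, and then concludes by subtraction that any set of $n/100$ vertices must be incident with at least $cn/5000$ green edges, so that some vertex in any such set has green degree at least $c/50$; the first assertion follows by taking the offending set to be the set of all vertices of small green degree. You instead argue locally, classifying each vertex as bad by one of four criteria (atypical $G$-degree, a neighbour outside $V_1$, being a popular light-blue choice, or many incident dark-blue edges) and showing that each type is rare and that a non-bad vertex has green degree at least $c/2-100-10^6-c/100$. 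Both arguments work only for a very large constant $c$. Your type-(iii) device (the $10^6$ threshold) is essentially a per-vertex version of the paper's global $100n$ bound, and is what forces your $c$ to be on the order of $10^6$ rather than roughly $10^5$; this is a matter of taste rather than substance.

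There is, however, a genuine (though easily repaired) gap in your type-(ii) estimate. You want to show $\sum_{u\in V\setminus V_1}\deg_G(u)=o(n)$ w.h.p., and after splitting off vertices of degree at most $2c$ you are left with $X=\sum_{u:\deg_G(u)>2c}\deg_G(u)$, for which you compute $\E X\leq cne^{-c/3}$ and then conclude that $X=o(n)$ ``w.h.p.\ by Markov's inequality.'' But $c$ is a fixed constant, so $\E X=\Theta(n)$, and Markov gives only a failure probability that is a (small but $n$-independent) constant of order $e^{-\Omega(c)}$ — not a bound tending to $0$ as $n\to\infty$, which is what w.h.p.\ requires. To repair this you should appeal to a concentration estimate: for instance, $X$ changes by at most $O(c)$ when a single edge of $G_{n,p}$ is toggled, so a bounded-differences inequality (or a second-moment calculation using the near-independence of degrees) shows that $X\leq 2\E X\leq 2cne^{-c/3}$ with probability $1-o(1)$ for every fixed $c$, after which the rest of your bound goes through. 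Your reduction of the second assertion to the first is the same computation as in the paper and is fine.
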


\begin{proof}
At most $100n$ edges are colored light blue and thereafter the Chernoff bounds imply that w.h.p. at most $(1+\epsilon)cn/4000$ edges are colored dark blue, for some arbitrarily small positive $\e$. The probability that a vertex has degree less than $c/4$ in $G_{n,p}$  is bounded by $\frac{2e^{-c} \lambda^{c/4}}{c/4!}< 1/1000$. Azuma's inequality or the Chebyshev inequality can be employed to show that w.h.p. there are at most $n/1000$ vertices of degree less than $c/4$. Therefore every set of $n/100$ vertices is incident with at least $[(n/100-n/1000)c/4]/2$ edges. And hence with at least $ [(n/100-n/1000)c/4]/2 - (1+\epsilon)cn/4000-100 n\geq c/50 \cdot n/100$ green edges. Thus in every set of vertices of size at least $n/100$ there exists a vertex that is incident to $c/50$ green edges, proving the first part of our Lemma. 

It follows that w.h.p. every set of size $n/4$ has total green degree at least 
\[
\brac{\frac{n}{4}-\frac{n}{100}}\times \frac{c}{50}>\frac{cn}{250}.
\]
\end{proof}

\subsection{P\'osa Rotations}\label{Posa}
We say that a path/cycle $P$ in $\G^*$ is {\em compatible} if for every $\set{v,w}\in M^*$ either $P$ contains the edge $\set{v,w}$ or $V(P)\cap \set{v,w} = \emptyset$. Our aim therefore is to show that w.h.p. $\G^*$ contains a compatible hamilton cycle.
Suppose that $\G^*$ and hence $\G_b^*$ is not Hamiltonian and that $P=(v_1,v_2,\ldots,v_s)$ is a longest compatible path in both $\G^*$ and $\G_b^*$. If $\set{v_s,v_i} \in E(\G^*)$ and $v_i\in V_1$ then the path $P'=(v_1,v_2,\ldots,v_i,v_s,v_{s-1},\ldots,v_{i+1})$ is said to be obtained from $P$ by an {\em acceptable} rotation with $v_1$ as the fixed endpoint. We also call $v_i$ the {\em pivot vertex}  and the edges $\{v_s,v_i\}, \{v_i,v_{i+1}\}$  the {\em pivot edges}.  Observe that since $P$ is compatible and $\set{v_i,v_{i+1}} \notin M^*$  (since $v_i \in V_1$) then $P'$ is also  compatible. Let $END_b^*(P,v_1)$ be the set of vertices that are endpoints of paths that are obtainable from $P$ by a sequence of acceptable rotations with $v_1$ as the fixed endpoint. Then, for $v\in END_b^*(P,v_1)$ we let $END_b^*(P_v,v)$ be defined similarly. Here $P_v$ is a path with endpoints $v_1,v$ obtainable from $P$ by acceptable rotations.

Arguing as in the proof of P\'osa's lemma we see that $|N_b(END^*_b(P,v_1))|\leq 2|END_b^*(P,v_1)|$. 
Indeed, assume otherwise. Then there exist vertices $v_i,u$ such that $u \in END_b^*(P,v_1)$, $v_i \in N_b(u)\subseteq V_1$, $v_{i-1},v_{i+1}  \notin END_b^*(P,v_1)$ and the edge $\{u,v_i\}$ can be used by an acceptable rotation with $v_1$ as the fixed endpoint that ``rotates out" $u$. Any such rotation will create a path with either $v_{i-1}$ or $v_{i+1}$ as a new endpoint, say $v_{i-1}$. Now $v_i \in V_1$ and so the rotation will be acceptable and hence $v_{i-1} \in END_b^*(P,v_1)$ resulting in a contradiction. 

\begin{lemma}\label{explem}
W.h.p. for every path $P$ of maximal length in $\G_b^*$ and an endpoint  $v$ of $P$ we have that $|END_b^*(P_v,v)|\geq  n/4$.
\end{lemma}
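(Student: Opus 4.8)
The plan is to combine the standard P\'osa rotation argument with the expansion estimate of Lemma \ref{explem1} and the structural facts about $\G_b^*$. First I would fix a maximal compatible path $P$ in $\G_b^*$ with endpoint $v$, and let $E_0=\mathrm{END}_b^*(P_v,v)$. The key local observation, already recorded just before the lemma, is that $|N_b(E_0)|\le 2|E_0|$; moreover, by the result cited from \cite{BCFF,FPitt}, the set $E_0\cup N_b(E_0)$ is connected in $\G_b$ and spans at least $|E_0|+|N_b(E_0)|+1$ edges. Thus if $|E_0|\le n/4$ then $E_0$ would be a set witnessing conditions (i), (ii), (iii) of Lemma \ref{explem1}, which is ruled out w.h.p. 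Hence w.h.p. $|E_0|> n/4$ for every maximal $P$ and every endpoint $v$, which is exactly the statement.

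In more detail, the steps are: (1) recall that $E_0\subseteq V^*$ and, since $P$ is a \emph{longest} compatible path, no vertex of $E_0$ has a $\G_b^*$-neighbor outside $V(P)$ — this is what makes P\'osa's argument go through and is implicit in the paragraph preceding the lemma; (2) reprove (or quote) the inequality $|N_b(E_0)|\le 2|E_0|$ using the acceptable-rotation analysis already given, noting that pivot vertices lie in $V_1$ so compatibility is preserved and both ``rotated-out'' neighbors $v_{i-1},v_{i+1}$ would be forced into $E_0$; (3) invoke the connectivity-and-extra-edge property of $E_0\cup N_b(E_0)$ from \cite{BCFF,FPitt}; (4) apply Lemma \ref{explem1} with $S=E_0$: the only way to avoid a contradiction is $|E_0|>n/4$; (5) take a union bound over the (polynomially many, or rather, it suffices to note the event in Lemma \ref{explem1} is a single w.h.p. event not depending on $P$) choices — actually no union bound over $P$ is needed, since Lemma \ref{explem1} already says w.h.p. \emph{no} bad set $S$ exists at all, so the conclusion holds simultaneously for all maximal $P$ and all endpoints $v$.

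The main obstacle, and the only place requiring genuine care, is verifying hypothesis (iii) of Lemma \ref{explem1} for $S=E_0$, i.e. that $E_0\cup N_b(E_0)$ contains at least $|E_0|+|N_b(E_0)|+1$ edges in $\G_b$. This is where the external references \cite{BCFF,FPitt} are used: the point is that the rotation structure forces the edges of $P$ restricted to $E_0\cup N_b(E_0)$ together with the ``rotation'' edges to form a subgraph with cyclomatic number at least $2$ (two distinct cycles), giving the surplus of $+1$ over a spanning tree of the connected set. One should also double-check the boundary case $|E_0|\le n/4$ versus the regime $n_1<|S|\le n/4$ in Lemma \ref{explem1}: since trivially $|E_0|\ge 1$ and in fact $|E_0|$ is large (it contains $v$ and grows under rotations), both the small-set (Case 1) and medium-set (Case 2) parts of Lemma \ref{explem1} are what we need, and together they exclude all $|S|\le n/4$. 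No nontrivial computation is required beyond this; the probabilistic content is entirely absorbed into Lemma \ref{explem1} and the cited combinatorial fact about P\'osa endpoint sets.
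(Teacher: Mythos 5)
Your proposal is correct and follows essentially the same route as the paper: take $S=END_b^*(P_v,v)$, verify hypotheses (i)--(iii) of Lemma~\ref{explem1} using the P\'osa rotation structure, and conclude $|S|>n/4$ from the nonexistence, w.h.p., of any bad set. You correctly observe that no union bound over $P$ is needed since Lemma~\ref{explem1} already rules out \emph{all} bad sets simultaneously.

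One subtlety you gloss over, and which the paper flags explicitly, is that Lemma~\ref{explem1} is stated for $\G_b\subseteq G_{n,p}$, whereas the rotations happen in $\G_b^*=\G_b\cup M^*$, and $M^*$ is \emph{not} a subgraph of $G_{n,p}$ (its edges stand for paths). So one cannot simply invoke the \cite{BCFF,FPitt} edge-count for the rotation structure in $\G_b^*$ and feed it to Lemma~\ref{explem1}. The resolution — which you use for condition (i) but not explicitly for condition (iii) — is that every pivot vertex lies in $V_1$, hence the set $R$ of pivots satisfies $R\subseteq V_1\subseteq N_b(S)$ and every pivot edge is incident to $V_1$, so it cannot be an $M^*$ edge and therefore lies in $\G_b\subseteq G_{n,p}$. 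The cited result then gives that $E_R$ spans a connected subgraph of $\G_b$ on $S\cup R$ with at least $|S|+|R\setminus S|+1$ edges; adding one $\G_b$-edge from each vertex of $N_b(S)\setminus R$ into $S$ then yields that $S\cup N_b(S)$ is connected in $\G_b$ with at least $|S|+|N_b(S)|+1$ edges, which is condition (iii). You should make this $\G_b$ versus $\G_b^*$ step explicit, since otherwise the application of Lemma~\ref{explem1} is not justified.
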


Observe that the underlying graph in  Lemma \ref{explem1} is $\G_b$ and so we can not apply it directly to obtain Lemma \ref{explem}. In addition $\G_b^*$ is not a subgraph of $G_{n,p}$, since the edges in $M^*$ that are added correspond to paths in $G_{n,p}$.
\begin{proof}
We will show that $S=END_b^*(P_v,v)$ satisfies (i), (ii) and (iii) of Lemma \ref{explem1}. For this let $R=R(P_v,v)$ be the set of pivot points and $E_R=E_R(P)$ be the set of pivot edges. It is shown in \cite{BCFF} and also in \cite{FPitt} that if $S$ is the set of endpoints created by P\'osa rotations (see Section \ref{Posa}) then $E_R$ spans a connected subgraph on $S\cup R$ that consists of at least $|S|+|R\setminus S|+1$ edges.  

The key observation is that if $v$ is the pivot vertex of an acceptable rotation then, by definition, we have that $v\in V_1$. Consequently $R\subseteq V_1$ (i.e $R \subseteq N_b(S)$) and every edge in $E_R$ belongs to $E(\G_b) \subseteq E(G_{n,p})$. This would not have necessarily been true if $R \cap  V_2^* \neq \emptyset$. Finally, $(N_b(S)\setminus R):S$ spans at least $|N_b(S)\setminus R|$ edges in $\G_b$. Hence $N_b(S) \cup S$ is connected in $\G_b$ and spans at least $(|S|+|R\setminus S|+1)+|N_b(S)\setminus R| = |S|+|N_b(S)|+1$ edges. This verifies conditions (ii) and (iii) of Lemma \ref{explem1}. Condition (i) is satisfied by the discussion preceeding Lemma. \ref{explem}.
\end{proof}

From Lemma \ref{explem} we see that w.h.p. $|END_b^*(P_v,v)|\geq  n/4$ for all $v\in END_b^*(P,v_1)$. We let 
\[
END_b^*(P)=END_b^*(P,v_1)\cup \bigcup_{v\in END^*(P,v_1)}END_b^*(P_v,v).
\]

\subsection{Coloring argument}
We use a modification of a double counting argument that was first used in \cite{FF}. The specific version is from \cite{FF1}. Given a two edge-colored $\G^*$, we choose for each $v\in V_1$, an incident edge $\xi_v=\set{v,\eta_v}$ where $\eta_v\in V_1 \cup  V_2^*$. We re-color $\xi_v$ blue if it is not already colored blue. There are at most $\P=\prod_{v\in V_1}d(v)$ choices for $\bsx=(\xi_v,v\in V_1)$. 

For a graph $\G$, $\G=\G^*$ or $\G_b^*$, we let $\ell(\G)$ denote the length of the longest compatible path in $\G$. We indicate that $\G$ has a compatible Hamilton cycle by $\ell(\G)=|V(\G)|$.

We now let $a(\bsx,\G^*_g)=1$ if the following hold:
\begin{enumerate}[H1]
\item $\G_b^*$ is not Hamiltonian.
\item $\ell(\G_b^*)=\ell(\G^*)$.
\item $|N_b(S)|\geq 2|S|$ for all $S\subseteq V(\G^*), |S|\leq n/4$.
\end{enumerate}
We observe first that if $\G^*$ is not Hamiltonian and H2 holds then there exists $\bsx$ such that $a(\bsx,\G^*_g)=1$. Indeed, let $P=(v_1,v_2,\ldots,v_r)$ be a longest path in $\G^*$. Then we simply let $\xi_{v_i}$ be the edge $\set{v_i,v_{i+1}}$ for $1\leq i<r$.  It follows that if $\F$ denotes the number of  choices for $\G^*_g$ and $\p_{\bar{H}}$ is the probability that $\G^*$ is not Hamiltonian, then
\beq{A1}{
\p_{\bar{H}}\leq \frac{\displaystyle \sum_{\bsx,\G^*_g}a(\bsx,\G^*_g)}{\F}+o(1),
}
where the $o(1)$ term accounts for failure of the high probability events that we have identified so far.

On the other hand, we have as stated in (C3) above, that $\G^*_g$ is distributed as a random graph chosen uniformly from graphs with degree sequence $D^*_g$. Hence
\beq{A2}{
\sum_{\bsx,\G^*_g}a(\bsx,\G^*_g)\leq  \F\P\max_{\G_b^*}\p_b,
}
where $\p_b$ is defined as follows: let $P$ be some longest path in $\G^*_b$. Then $\p_g$ is the probability that a random realization of $\G_g^*$  does not include a pair $\set{x,y}$ where $y\in END_b^*(P,x)$.  We will argue below that
\begin{align}
\max_{\G_b}\p_b&\leq O(1)\times \prod_{v\in END_b^*(P)}\brac{1-\frac{ d_{\G^*_g}(v) {\displaystyle \sum_{w\in END_b^*(P_v,v)}d_{\G^*_g}(w)}}{ 2M}}^{\frac{1}{2}} \label{configx}\\
&\leq O(1)\times \exp\set{-\frac{{\displaystyle \sum_{v\in END_b^*(P)}  d_{\G^*_g}(v) {\displaystyle \sum_{w\in END_b^*(P_v,v)}d_{\G^*_g}(w)}}}{4M}}.\label{A3}
\end{align}
Lemma \ref{lem:green} implies that at least $n/4-n/100$ out of the at least $n/4$ vertices in  $END_b^*(P)$ have  $d_{\G^*_g}(v)\geq c/50$. Also, for such $v$ the set $END_b^*(P_v,v)\cup \{v\}$ is of size at least $n/4$ and so has total degree at least $cn/250$. Thus from \eqref{A3}, it follows that
$$ \max_{E_b}\p_g \leq O(1)\times \exp\set{-\frac{\frac{c}{50}\cdot (\frac{n}{4}-\frac{n}{100}) \cdot \frac{cn}{250} }{4M}} \leq e^{-cn/10^6}.$$

The Arithmetic-Geometric-mean inequality implies that 
\begin{align*}
\P\leq \prod_{v\in V_1} d(v)  \leq \bfrac{\sum_{v \in V} d(v)}{N}^N \leq (2c)^n
\end{align*}
It then follows that for sufficiently large $c$
\[
\p_{\overline{ H}}\leq (2c)^n \cdot e^{- cn/10^6 }+o(1)=o(1),
\]
and this completes the proof of \eqref{claim*}.

{\bf Proof of \eqref{configx}:} This is an exercise in the use of the configuration model of Bollob\'as \cite{B2}. Let $W=[2M_g]$ where $M_g$ is the number of green edges and let $W_1,W_2,\ldots,W_N$ be a partition of $W$ where $|W_v|=d_{\G^*_g}(v),v\in V_1$. The elements of $W$ will be referred to as {\em configuration points} or just as points.  A {\em configuration} $F$ is a partition of $W$ into $M_g$ pairs. Next define $\psi:W\to[N]$ by $x\in W_{\psi(x)}$. Given $F$, we let $\g(F)$ denote the (muti)graph with vertex set $V_1$ and an edge $\set{\psi(x),\psi(y)}$ for all $\set{x,y}\in F$. We say that $\g(F)$ is simple if it has no loops or multiple edges. Suppose that we choose $F$ at random. The properties of $F$ that we need are
\begin{enumerate}[{\bf P1}]
\item If $G_1,G_2\in \cG_{\bd_g}$ then $\Pr(\g(F)=G_1\mid \g(F)\text{ is simple})= \Pr(\g(F)=G_2\mid \g(F)\text{ is simple})$.
\item $\Pr(\g(F)\text{ is simple})=\Omega(1)$.
\end{enumerate}
These are well established properties of the configuration model, see for example Chapter 11 of \cite{FK}. Note that {\bf P2} uses the fact that w.h.p. $G_{V_1,M}^{\delta\geq 3}$ (and hence $\G_g^*$) has an exponential tail, as shown for example in \cite{hamd3}. Given all this, in the context of the configuration model, \eqref{configx} is a simple consequence of a random pairing of $W$. The $O(1)$ factor is $1/\Pr(\g(F)\text{ is simple})$ and bounds the effect of the conditioning. We take the square root to account for the possibility that $w\in END_b^*(P_v,v)$ and $v\in END_b^*(P_w,w)$.

\section{Proof of Theorem \ref{limit}}\label{seclimit}
For $v\in C_2$ we let $\phi(v)= \phi(T)/|\up_0(T)|$ if $v\in \up_0(T)$ for some $T \in \cT$ and $\phi(v)=0$ otherwise. (Recall that $\up_0(T)=V(T)\setminus V_2$.) Thus
$$\sum_{T \in \mathcal{T}}\phi(T)=\sum_{v \in C_2} \phi(v).$$

Hence \eqref{sizeC} can be rewritten as,
\begin{equation}\label{sizes2}
L_{c,n} \approx |C_2|- \sum_{v \in C_2} \phi(v).
\end{equation}

Let $k_1=k_1(\epsilon,c)$ be the smallest positive integer such that
\[
\sum_{k=k_1-1}^\infty (e^{\ec} 2^{\ec} ce^{-c/4})^k < \frac{\epsilon}{3}.
\]
Note that for large $c$, we have
\beq{sizek1}{
k_1\leq \frac{2}{c}\log\frac{1}\e.
}
 
For $v\in C_2$ let $G_v$ be the graph consisting of (i) the vertices of $G$ that are within distance $k_1$ from $v$  and (ii) a copy of $K_{\ec,\ec}$ where every vertex in the $k_1$ neighborhood of $v$ is adjacent to each vertex of the same one part of the bipartition. We consider the algorithm for the construction of $\Gamma_L$ on $G_v$ and let $C_{2,v},\Gamma_v, V_{1,v}, V_{2,v}, S_{L,v},\up_{0,v}(T)$ be the corresponding sets/quantities.

For a tree $T \in S_{L,v}$ let $f(T)$ be equal to $|T|$ minus the maximum number of vertices that can be covered by a set of vertex disjoint paths with endpoints in $V_{2,v}$ (we allow paths of length 0). For $v\in C_2$, if $v$ belongs to some tree $T \in S_{L,v}$ set $f(v)=f(T)/\up_{0,v}(T)$, otherwise set $f(v)=0$. 

For $v\in C_2$ let $t(v)=1$ if $v\in V_1$ or if $v\in S_L$ and in $\Gamma_L$, $v$ lies in a component with at most $k_1-2$ vertices that are not connected  to $V_1$ in $G$. Set $t(v)=0$ otherwise. Observe that if $t(v)=1$ then $\phi(v)=f(v)$. Otherwise $|\phi(v)-f(v)| \leq 1$. 

By repeating the arguments used to prove \eqref{11} and \eqref{vK} it follows that if 
$t(v)=0$ then $v$ lies on a component $C$ of size at most $\log n$. In addition at least $|V(C)|/\ec$ vertices in $V(C)$ are not adjacent to any vertex outside $V(C)$. Thus the expected number of vertices $v$ satisfying $t(v)=0$ is bounded by

\begin{align*}
&\sum_{k=k_1-1}^{\log^2 n} \sum_{j=k}^{\ec k}  \binom{n}{j} \binom{j}{k} j^{j-2}  p^{j-1} (1-p)^{k(n-j)}\\
& \leq  n \sum_{k=k_1-1}^{\log^2 n}  \ec k \bfrac{e}{{\ec} k}^{\ec k} 2^{{\ec} k}(\ec k)^{{\ec} k-2} c^{k-1}e^{-ck/4}\\ 
&\leq n  \sum_{k=k_1-1}^\infty (e^{\ec} 2^{\ec} ce^{-c/4})^k< \frac{\epsilon n}{3}.
\end{align*}

A vertex $v\in [n]$ is {\em good} if the $i$th level of its BFS neighborhood has size at most $3 c^i k_1/\epsilon$ for every $i\leq k_1$ and it is {\em bad} otherwise. Because the expected size of the $i^{th}$ neighborhood is $\approx c^i$ we have by the Markov inequality that $v$ is bad with probability at most $\approx \e/3k_1$ and so the expected number of bad vertices is bounded by $\e n/2$.  Thus
\begin{align*} 
\E\brac{\card{\sum_{v\in V}\phi(v)- \sum_{v\text{ is good }} f(v) }}
& \leq \E\brac{\card{\sum_{v\in V}\phi(v)- \sum_{v \in V} f(v)}}+ \E\brac{\card{\sum_{ v\text{ is bad }} f(v)}}\\
& \leq  \E\brac{\card{\sum_{v: t(v)=0}|\phi(v)-  f(v)}}+\E\brac{\sum_{v\text{ is bad }} 1}\\
& \leq \E\brac{\sum_{v: t(v)=0} 1} +\frac{\epsilon n}3\\
& \leq \frac{\epsilon n}2 +\frac{\epsilon n}3 < \epsilon n.
\end{align*}
Let $\mathcal{H}_\e$ be the set of BFS neighborhoods that are good i.e. whose $i$th levels are of size at most $3 c^i k_1/\epsilon$ for every $i\leq k_1$. Every element of $\mathcal{H}_\e$ corresponds to a pair $(H,o_H)$ where $H$ is a graph and $o$ is a distinguished vertex of $H$, that is considered to be the root.  Also for $v\in C_2$ let $G(N_{k_1}(v))$ be the  subgraph induced by the ${k_1}^{th}$ neighborhood of $v$. For $(H,o_H)\in \mathcal{H}_\e$ let $int(H)$ be the set of vertices incident to the first $k_1-1$ neighborhoods of $o_H$ and let $Aut(H,o_H)$ be the number of automorphisms of $H$ that fix $o_H$.  Note that each good vertex $v$ is associated with a pair $(H,o_H)\in\cH_\e$ from which we can compute $f(v)$, since $f(v)=f(o_H)$. Thus, if now $M=|E(C_2)|,N=|C_2|$,
\begin{align}
\E\brac{\sum_{v\text{ is good}}f(v)\bigg| M,N}&
=\sum_{v}\sum_{k\geq 1}\sum_{\substack{(H,o_H) \in \mathcal{H}_\e\\ (G(N_{k_1}(v)),v)=(H,o_H)\\|V(H)|=k}} \r_{H,o_H} f(o_H)\nonumber
\\&=o(n)+\sum_{v}\sum_{k\geq 1}\sum_{\substack{(H,o_H) \in \mathcal{H}_\e\\H\text{ is a tree}\\(G(N_{k_1}(v)),v)=(H,o_H)}} \r_{H,o_H} f(o_H),\label{goodvert}
\end{align}
where $\r_{H,\s_H}$ is the probability  $(G(N_{k_1}(v)),v)=(H,o_H)$ in $C_2$. We show in Section \ref{model} that
\beq{piH}{
\r_{H,o_H}\approx  \frac{1}{Aut(H,o_H)} \bfrac{N}{2M}^{k-1} \lambda^{2k-2} \frac{e^{k\l}}{f_2(\l)^k},
}
where $f_k$ is defined in \eqref{fk} below and $\l$ satisfies \eqref{2} below.

Finally observe that with the exception of the $o(1)$ term, all the terms in \eqref{goodvert} are independent of $n$. We let
\beq{hc}{
f_\e(c)= \sum_{k\geq 1}\sum_{\substack{(H,o_H) \in \mathcal{H}_\e\\H\text{ is a tree}}}  \frac{f(o_H)}{Aut(H,o_H)} \bfrac{N}{2M}^{k-1} \lambda^{2k-2} \frac{f_{2}(k\l)}{f_2(\l)^k}.
}
Then for a fixed $c$, we see that $f_\e(c)$ is monotone increasing as $\e\to 0$. This is simply because $\cH_\e$ grows. Furthermore, $f_\e(c)\leq 1$ and so the limit $f(c)=\lim_{\e\to0}f_\e(c)$ exists. This verifies part (a) of Theorem \ref{limit}. For part (b), we prove, (see \eqref{Azuma}),
\begin{lemma}
\[
\Pr(|L_{c,n}-\E(L_{c,n})| \geq \e n+n^{3/4}) = O(e^{-\Omega(n^{1/5})}).
\]
\end{lemma}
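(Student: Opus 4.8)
The plan is to establish the concentration inequality via a vertex-exposure (or rather edge-exposure) martingale combined with a bounded-differences argument that is robust to the fact that adding or deleting a single edge can, in principle, change $L_{c,n}$ by more than a constant. First I would set up the Azuma--Hoeffding framework: order the $\binom{n}{2}$ potential edges arbitrarily, let $\cF_i$ be the $\sigma$-field generated by the presence/absence of the first $i$ edges, and define the Doob martingale $Z_i = \E[L_{c,n}\mid\cF_i]$, so that $Z_0=\E[L_{c,n}]$ and $Z_{\binom n2}=L_{c,n}$. The naive Lipschitz bound $|Z_i-Z_{i-1}|\le 1$ is false here because a single edge can be a ``bridge'' whose removal disconnects a long cycle; however, the key structural point is that such bad edges are rare. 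The standard fix (as in, e.g., the treatment of the longest path in sparse random graphs) is to work with a truncated random variable: define a ``bad event'' $\cB$ on which $G_{n,p}$ fails one of the high-probability structural conclusions established above (the bounds \eqref{smallS}, \eqref{MNsize}, the expansion Lemma~\ref{explem1}, Lemma~\ref{lem:green}, and the size bounds on non-tree components), and condition on $\neg\cB$.

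The core of the argument is then a bounded-differences estimate conditional on the good event. Here I would argue that, off $\cB$, changing one edge changes $L_{c,n}$ by at most $O(\log n)$: the longest cycle lives in the $2$-core, the structure $\Gamma_L$ consists of trees of size $O(\log n)$ plus $o(n)$ exceptional vertices, and adding/removing one edge perturbs $S_L$ and the collection $\cT$ only locally — at most by merging or splitting a bounded number of tree components, each of size $O(\log n)$, plus possibly creating one new short ``bad'' cycle. So I would prove a deterministic lemma: if $G,G'$ differ in one edge and both avoid $\cB$, then $|L(G)-L(G')|\le K\log n$ for an absolute constant $K$. Feeding $c_i=K\log n$ into Azuma gives $\Pr(|Z_{\binom n2}-Z_0|\ge t\mid\neg\cB)\le 2\exp(-t^2/(2\binom n2 K^2\log^2 n))$, which with $t=n^{3/4}$ is far too weak — $n^{3/2}$ in the numerator versus $n^2\log^2 n$ in the denominator gives no decay.

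Therefore the real plan must use a sharper ``typical bounded differences'' inequality (McDiarmid's method, or the Warnke / Kim--Vu polynomial concentration, or the approach via a two-stage exposure): most of the $c_i$ are $O(1)$, and only a vanishing fraction are as large as $O(\log n)$. Concretely, I would invoke a concentration inequality of the form: if $|Z_i-Z_{i-1}|\le b_i$ always and $|Z_i-Z_{i-1}|\le a_i$ except on an event of small probability, then $Z$ concentrates with the $a_i$'s controlling the Gaussian term and the $b_i$'s a small correction. With $a_i=O(1)$ for all $i$ (the generic Lipschitz constant, valid off a local bad event around edge $i$) and $b_i=O(\log n)$, and the bad events having total probability $O(e^{-\Omega(n^{1/5})})$ by the structural lemmas (suitably strengthened — e.g.\ the non-tree-component and large-component estimates give failure probability polynomially or exponentially small; one may need to re-examine \eqref{11} to extract a stretched-exponential tail, which is where the exponent $n^{1/5}$ presumably comes from), one gets $\Pr(|L_{c,n}-\E L_{c,n}|\ge \e n/2)$ — wait, we want the sharper $\e n + n^{3/4}$. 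The $\e n$ slack absorbs the $o(n)$ discrepancies inherent in $\approx$ (the $o(n)$ vertices on bad components, the $o(n)$ edge-count fluctuations of \eqref{MNsize}), while the $n^{3/4}$ is the genuine martingale fluctuation: with $\sum a_i^2 = O(n^2)$ and $a_i=O(1)$, Azuma gives deviation $O(n^{1/2}\sqrt{\log(1/\delta)})$, and choosing $\delta=e^{-n^{1/5}}$ yields deviation $O(n^{1/2}\cdot n^{1/10})=O(n^{3/5})=o(n^{3/4})$, comfortably inside the claimed bound; the extra room in the $n^{3/4}$ term is there precisely to swallow the $b_i=O(\log n)$ corrections over the $O(e^{-\Omega(n^{1/5})}\cdot n^2)$ ``bad'' coordinates.

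The main obstacle, and the step I would spend the most care on, is the deterministic local-stability lemma: showing that off the good event, a single edge change moves $L_{c,n}$ by only $O(1)$ \emph{generically} and by only $O(\log n)$ always. This requires tracking how $S_L$, the tree decomposition $\cT$, and the quantities $\f(T)$ respond to a local edge perturbation, and — crucially — re-running the Hamiltonicity argument of Section~\ref{expand}--\ref{Posa} to confirm that the longest cycle in $C_2$ really does have length $|V(C_2)|-\sum_T\f(T)\pm O(\log n)$ \emph{for every} graph in the good event, not merely w.h.p.; i.e.\ one needs the conclusion \eqref{claim*} to hold deterministically on $\neg\cB$. Once that is in hand, the concentration inequality itself is a black-box application (I would cite Warnke's ``method of typical bounded differences'' or an analogous statement), and part (b) of Theorem~\ref{limit} follows from part (a) plus this lemma by a routine Borel--Cantelli argument along $n$, since $\sum_n e^{-\Omega(n^{1/5})}<\infty$.
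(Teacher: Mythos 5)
Your approach is genuinely different from the paper's and has a gap that, as set up, looks fatal rather than merely technical. The paper does \emph{not} run a martingale on $L_{c,n}$ over $G_{n,p}$ at all. Instead it proves the lemma by reducing to concentration of the rooted-subgraph counts $\n(H)$, $H\in\cH_\e$, in the \emph{conditional} model: one conditions on $(M,N)$ and on a degree sequence satisfying \eqref{degconc}, passes to the configuration model, and generates a configuration as a permutation of the multiset of configuration points. Swapping two entries of that permutation changes each $\n(H)$ by $O(1)$, so the permutation version of Azuma's inequality immediately gives $\Pr(|\n(H)-\E\n(H)|\geq n^{3/5})=O(\exp(-\Omega(n^{6/5}/n)))=O(e^{-\Omega(n^{1/5})})$ — this is where the $n^{1/5}$ comes from. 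A union bound over the $\exp\bigl(O((1/\e)^{2+2\log c/c})\bigr)$ graphs in $\cH_\e$, all of which is independent of $n$, then yields the lemma; the $\e n$ slack absorbs the error in replacing $\sum_v\phi(v)$ by $\sum_{v\text{ good}}f(v)$. Nowhere does the paper need a Lipschitz-type statement for $L_{c,n}$ itself.

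Your proposal instead requires two things you do not have. First, the ``deterministic local-stability lemma'' — that off a good event, flipping one edge of $G_{n,p}$ changes $L_{c,n}$ by $O(1)$ generically and $O(\log n)$ always — is not proven, and it is not clear it can be proven in the form you need. In particular you would have to make \eqref{claim*} hold deterministically on a good event of stretched-exponentially small failure probability, but the Hamiltonicity argument of Section 2 is a double-counting/averaging argument over random green colorings of a \emph{fixed} $G$; it shows $\p_{\bar H}=o(1)$, not that Hamiltonicity follows deterministically from a short list of structural properties. Second, and independently, the bad events underpinning the structural lemmas do not have probability $O(e^{-\Omega(n^{1/5})})$: Lemma~\ref{lem1} fails with probability only $O(n^{-2})$ (the $s=4$ term dominates), the non-tree component bound uses Markov and is only $o(1)$, and \eqref{11} gives $o(n^{-2})$. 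A ``typical bounded differences'' inequality cannot produce a stretched-exponential tail when the event on which the Lipschitz constant degrades has only polynomial decay. So even if you proved the stability lemma, the concentration step would stall at a polynomially small failure probability, not the claimed $O(e^{-\Omega(n^{1/5})})$. The paper sidesteps both obstacles by never touching $L_{c,n}$ directly — the objects it concentrates are the local statistics $\n(H)$ in a model (the configuration model on a fixed degree sequence) where the Lipschitz property is trivial.
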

\begin{proof}
To prove this we show that if $\n(H)$ is the number of copies of $H$ in $C_2$ then $H\in \cH_\e$ implies that
\beq{nH}{
\Pr(|\n(H)-\E(\n(H))| \geq n^{3/5}) =O(e^{-\Omega(n^{1/5})}).
}
The inequality follows from a version of Azuma's inequality  (see \eqref{Azuma}), and the lemma follows from taking a union bound over 
\multstar{
\exp\set{O\bfrac{c^{k_1(\epsilon)}k_1(\epsilon)}{\epsilon}}=\exp\set{O\bfrac{c^{\frac{2\log \frac{1}{\e}}{c}}\frac{2\log \frac{1}{\e}}{c}}{\e}}\\
=\exp\set{O\bfrac{(1/\e)^{2\log c/c} \log \frac{1}{\e}}{c\e}}=\exp\set{O((1/\e)^{2+2\log c/c})}
}
graphs $H$.
 Note also that the $o(n)$ term in \eqref{goodvert} is bounded by the same $e^{O((1/\e)^{2+2\log c/c})}$ term times the number of cycles of length at most $2k_1$ in $G$. The probability that this exceeds $n^{1/2}$ is certainly at most the RHS of \eqref{nH}. We will give details of our use of the Azuma inequality in Section \ref{model}.
\end{proof}
Part (b) of Theorem \ref{limit} follows by letting $\e\to 0$ and from the Borel-Cantelli lemma.
\subsection{A Model of $C_2$}\label{model}
It is known that given $M,N$ that, up to relabeling vetices, $C_2$ is distributed as $G_{N,M}^{\delta\geq 2}$. The random graph  $G_{N,M}^{\delta\geq 2}$ is chosen uniformly from $\cG_{N,M}^{\delta\geq 2}$ which is the set of graphs with vertex set $[N]$, $M$ edges and minimum degree at least two.

\subsubsection{Random Sequence Model}\label{refined}
We must now take some time to explain the model we use for $G_{N,M}^{\delta\geq2}$. We use a variation on the pseudo-graph model of Bollob\'as and Frieze \cite{BollFr} and Chv\'atal \cite{Ch}. Given a sequence $\bx = (x_1,x_2,\ldots,x_{2M})\in [n]^{2M}$ of $2M$ integers between 1 and $N$ we can define a (multi)-graph
$G_{\bx}=G_\bx(N,M)$ with vertex set $[N]$ and edge set $\{(x_{2i-1},x_{2i}):1\leq i\leq M\}$. The degree $d_\bx(v)$ of $v\in [N]$ is given by 
$$d_\bx(v)=|\set{j\in [2M]:x_j=v}|.$$
If $\bx$ is chosen randomly from $[N]^{2M}$ then $G_{\bx}$ is close in distribution to $G_{N,M}$. Indeed,
conditional on being simple, $G_{\bx}$ is distributed as $G_{N,M}$. To see this, note that if $G_{\bx}$ is simple then it has vertex set $[N]$ and $M$ edges. Also, there are $M!2^M$ distinct equally likely values of $\bx$ which yield the same graph. 

Our situation is complicated by there being a lower bound of 2 on the minimum degree. So we let
$$[N]^{2M}_{\delta\geq 2}=\{\bx\in [N]^{2M}:d_\bx(j)\geq 2\text{ for }j\in[N]\}.$$
Let $G_\bx$ be the multi-graph $G_\bx$ for $\bx$ chosen uniformly from $[N]^{2M}_{\delta\geq2}$. It is clear then that conditional on being simple, $G_\bx$ has the same distribution as $G_{N,M}^{\d\geq 2}$. It is important therefore to estimate the probability that this graph is simple. For this and other reasons, we need to have an understanding of the degree sequence $d_\bx$ when $\bx$ is drawn uniformly from $[N]^{2M}_{\delta\geq2}$. Let 
\beq{fk}{
f_k(\l)=e^\l-\sum_{i=0}^{k-1}\frac{\l^i}{i!}
}
for $k\geq 0$.
\begin{lemma}
\label{lem3}
Let $\bx$ be chosen randomly from $[N]^{2M}_{\delta\geq2}$. Let $Z_j,j=1,2,\ldots,N$ be independent copies of a {\em truncated Poisson} random variable $\cP$, where
$$\Pr(\cP=t)=\frac{{\l}^t}{t!f_2({\l})},\hspace{1in}t\geq 2.$$
Here ${\l}$ satisfies
\begin{equation}\label{2}
\frac{{\l}f_{1}({\l})}{f_2({\l})}=\frac{2M}{N}.
\end{equation}
Then $\{d_\bx(j)\}_{j\in [N]}$ is distributed as $\{Z_j\}_{j\in [N]}$ conditional on $Z=\sum_{j\in [n]}Z_j=2M$.
\end{lemma}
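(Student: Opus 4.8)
The plan is to recognize that the uniform distribution on $[N]^{2M}_{\delta\geq 2}$ has, up to ordering, a degree sequence which is a maximum-entropy object, and then to identify it explicitly via a direct counting argument. First I would count, for a fixed degree sequence $(d_1,\dots,d_N)$ with each $d_j\geq 2$ and $\sum_j d_j=2M$, the number of $\bx\in[N]^{2M}_{\delta\geq 2}$ realizing it: this is the multinomial coefficient $\binom{2M}{d_1,\dots,d_N}=\frac{(2M)!}{\prod_j d_j!}$, since $\bx$ is just a word of length $2M$ in which letter $j$ appears $d_j$ times. Hence for $\bx$ uniform on $[N]^{2M}_{\delta\geq 2}$,
\[
\Pr(d_\bx = (d_1,\dots,d_N)) = \frac{1}{\card{[N]^{2M}_{\delta\geq 2}}}\cdot\frac{(2M)!}{\prod_{j} d_j!}.
\]
On the other hand, with $Z_1,\dots,Z_N$ i.i.d. truncated Poisson($\l$) as in the statement,
\[
\Pr\big((Z_1,\dots,Z_N)=(d_1,\dots,d_N)\big) = \prod_{j} \frac{\l^{d_j}}{d_j!\,f_2(\l)^{N}} = \frac{\l^{\sum_j d_j}}{f_2(\l)^N}\cdot\frac{1}{\prod_j d_j!}.
\]
Conditioning on $Z=\sum_j Z_j = 2M$, the factor $\l^{\sum_j d_j}/f_2(\l)^N = \l^{2M}/f_2(\l)^N$ becomes a constant independent of the particular sequence $(d_j)$, so the conditional law of $(Z_j)$ is proportional to $1/\prod_j d_j!$ on the set of sequences with $d_j\geq 2$ and $\sum_j d_j=2M$ — exactly proportional to $\Pr(d_\bx=(d_1,\dots,d_N))$. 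Since both are probability distributions on the same finite set, they coincide, which is the claim.

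The role of $\l$ and equation \eqref{2} is then purely a matter of consistency/normalization: the conclusion of the lemma holds for \emph{any} choice of $\l>0$, because we are conditioning on $Z=2M$ regardless. The specific $\l$ solving $\l f_1(\l)/f_2(\l)=2M/N$ is chosen so that $\E[Z_j]=2M/N$, i.e. so that the conditioning event $\{Z=2M\}$ has probability $\Theta(N^{-1/2})$ rather than being exponentially unlikely; this is what makes the model useful downstream (e.g. for the local estimates leading to \eqref{piH}), but it is not needed for the stated distributional identity. I would note that $\l\mapsto \l f_1(\l)/f_2(\l)$ is continuous and strictly increasing from $0$ (as $\l\to 0$, the ratio $\to 2$, matching the fact that $2M/N\geq 2$) to $\infty$, so \eqref{2} has a unique solution whenever $2M/N>2$; the boundary case $2M/N=2$ (every vertex has degree exactly $2$) is degenerate and can be treated separately or excluded since w.h.p. $2M/N$ is bounded away from $2$ for $C_2$ when $c$ is large.

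The only genuine subtlety — and the step I would flag as the main point to get right — is the bookkeeping that $\card{[N]^{2M}_{\delta\geq 2}} = \sum_{(d_j):\,d_j\geq 2,\ \sum d_j=2M}\frac{(2M)!}{\prod_j d_j!}$, which is automatic from partitioning $[N]^{2M}_{\delta\geq 2}$ by degree sequence, together with observing that this sum equals $(2M)!\cdot f_2(\l)^N\,\l^{-2M}\cdot\Pr(Z=2M)$ so that the two normalizing constants match up and no hidden discrepancy remains. Everything else is a one-line comparison of the two displayed formulas. There is no probabilistic estimation in this lemma itself; all concentration statements (the Azuma bound \eqref{Azuma}, the probability that $G_\bx$ is simple, etc.) are separate and invoked elsewhere.
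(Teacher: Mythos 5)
Your proof is correct. The paper itself gives no argument here---it simply cites Lemma~4 of \cite{AFP}---and the direct counting argument you present is the standard derivation that reference provides: partitioning $[N]^{2M}_{\delta\geq 2}$ by degree sequence gives $\frac{(2M)!}{\prod_j d_j!}$ sequences per admissible $(d_j)$, so the uniform law is proportional to $1/\prod_j d_j!$; the i.i.d.\ truncated-Poisson law conditioned on $\sum_j Z_j=2M$ is also proportional to $1/\prod_j d_j!$ since the tilt $\lambda^{2M}/f_2(\lambda)^N$ is constant on the conditioning event; two probability measures proportional to the same function on the same finite set coincide. Your further observations are correct and worth having: the distributional identity holds for any $\lambda>0$, and \eqref{2} is chosen only so that $\E Z_j = 2M/N$, which makes $\Pr(\sum_j Z_j=2M)$ polynomially (rather than exponentially) small and is what powers the local-CLT estimates \eqref{local1}--\eqref{local3}; and $\lambda\mapsto \lambda f_1(\lambda)/f_2(\lambda)$ increases from $2$ (as $\lambda\to 0$) to $\infty$, so \eqref{2} has a unique root whenever $2M/N>2$, which holds here since $M\approx cN/2$ with $c$ large. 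One typo to fix: in your display for $\Pr\bigl((Z_1,\dots,Z_N)=(d_1,\dots,d_N)\bigr)$, the middle expression should be $\prod_j \frac{\lambda^{d_j}}{d_j!\,f_2(\lambda)}$ with $f_2(\lambda)$ (not $f_2(\lambda)^N$) inside the product; the right-hand side you then wrote is correct.
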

\begin{proof}
This can be derived as in  Lemma 4 of \cite{AFP}.
\end{proof}
It follows from \eqref{MNsize} and \eqref{2} and the fact that $f_{1}({\l})/f_2({\l})\to 1$ as $c\to \infty$ that for large $c$,
\beq{uplam}{
\l=c\brac{1+O(ce^{-c})}.
}
We note that the variance $\s^2$ of $\cP$ is given by
\[
\s^2=\frac{\l(e^\l-1)^2-\l^3e^\l}{f_2^2(\l)}.
\]
Furthermore,
\begin{align}
\Pr\left(\sum_{j=1}^NZ_j=2M\right)&=\frac{1}{\s\sqrt{2\p N}}(1+O(N^{-1}\s^{-2}))\label{local1}\\
\noalign{and}
\Pr\left(\sum_{j=2}^NZ_j=2M-d\right)&=\frac{1}{\s\sqrt{2\p N}}\left(1+O((d^2+1)N^{-1}\s^{-2})\right). \label{local2}
\end{align}
This is an example of a local central limit theorem. See for example, (5) of \cite{AFP} or (3) of \cite{hamd3}. It follows by repeated application of \eqref{local1} and \eqref{local2} that if $k=O(1)$ and $d_1^2+\cdots+d_k^2=o(N)$ then
\beq{local3}{
\Pr\brac{Z_i=d_i,i=1,2,\ldots,k\mid\sum_{j=1}^NZ_j=2M}\approx \prod_{i=1}^k\frac{\l^{d_i}}{d_i!f_2(\l)}.
}
Let $\n_\bx(s)$ denote the number of vertices of degree $s$ in $G_\bx$. 
\begin{lemma}
\label{lem4x}
Suppose that $\log N=O((N {\l})^{1/2})$. Let $\bx$ be chosen randomly from $[N]^{2M}_{\delta\geq2}$. Then as in equation (7) of \cite{AFP}, we have that with probability $1-o(N^{-10})$,
\begin{align}
\left|\n_\bx(j)-\frac{N{\l}^j}{j!f_{2}({\l})}\right|& \leq \brac{1+\bfrac{N {\l}^j}{j!f_2({\l})}^{1/2}}\log^2 N,\ 2\leq j\leq \log N.\label{degconc}\\
\n_\bx(j)&=0,\quad j\geq \log N.\label{degconc1}
\end{align}
\end{lemma}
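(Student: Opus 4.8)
The plan is to use Lemma \ref{lem3} to pass from the uniform model on $[N]^{2M}_{\delta\geq2}$ to $N$ i.i.d.\ truncated Poisson variables conditioned on their sum, prove both displayed estimates in the \emph{unconditioned} i.i.d.\ model by Chernoff and first-moment bounds, and then remove the conditioning with the local limit estimate \eqref{local1}. By Lemma \ref{lem3}, the sequence $(\nu_\bx(j))_{j\geq2}$ has the same law as $\big(\sum_{i=1}^N\mathbbm{1}(Z_i=j)\big)_{j\geq2}$ conditioned on $\sum_{i=1}^NZ_i=2M$, where $Z_1,\dots,Z_N$ are i.i.d.\ copies of $\cP$. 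Put $p_j=\Pr(\cP=j)=\l^j/(j!f_2(\l))$; then in the unconditioned model $\nu(j):=\sum_{i=1}^N\mathbbm{1}(Z_i=j)\sim\Bin(N,p_j)$ has mean exactly $Np_j=N\l^j/(j!f_2(\l))$, which is the target value.

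First I would establish \eqref{degconc} in the unconditioned model. Set $t_j=(1+(Np_j)^{1/2})\log^2N\geq\log^2N$. A Bernstein inequality for the sum of the bounded indicators $\mathbbm{1}(Z_i=j)$ gives $\Pr(|\nu(j)-Np_j|\geq t_j)\leq 2\exp\big(-t_j^2/(2(Np_j+t_j/3))\big)$; distinguishing the cases $Np_j\geq t_j$ and $Np_j<t_j$ one checks that the exponent is at least $\tfrac38\log^2N$ in both (in the first, $t_j\geq(Np_j)^{1/2}\log^2N$; in the second the denominator is at most $\tfrac83t_j$), so this probability is $N^{-\omega(1)}$, uniformly over $2\leq j\leq\log N$. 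When $Np_j<t_j$ the lower bound in \eqref{degconc} is moreover vacuous since $\nu(j)\geq0$. For \eqref{degconc1} I would use $\Pr(\exists i:Z_i\geq\log N)\leq N\Pr(\cP\geq\log N)\leq 2N\l^{\log N}/(f_2(\l)(\log N)!)$, and since $\l=\Theta(1)$ and $(\log N)!\geq(\log N/e)^{\log N}$ this is $\exp(-\Omega(\log N\log\log N))=N^{-\omega(1)}$. A union bound over the $O(\log N)$ events shows that in the unconditioned model all of \eqref{degconc} and \eqref{degconc1} hold with probability $1-N^{-\omega(1)}$.

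Second I would strip off the conditioning. For any event $\cA$ one has $\Pr(\cA\mid\sum_iZ_i=2M)\leq\Pr(\cA)/\Pr(\sum_iZ_i=2M)$. Using $\l=c(1+O(ce^{-c}))$ from \eqref{uplam} in the displayed formula for $\s^2$ one finds $\s^2=\Theta(\l)=\Theta(1)$ (a constant bounded away from $0$ for large $c$), so \eqref{local1} yields $\Pr(\sum_iZ_i=2M)=\Theta(N^{-1/2})$. Dividing the $N^{-\omega(1)}$ unconditioned failure probability by $\Theta(N^{-1/2})$ still leaves $N^{-\omega(1)}$, in particular $o(N^{-10})$, which is the assertion. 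The hypothesis $\log N=O((N\l)^{1/2})$, which matters in \cite{AFP} only when $\l$ grows with $N$, holds trivially here.

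The genuinely delicate point --- and essentially the only one --- is arranging that the single deviation window $t_j=(1+(Np_j)^{1/2})\log^2N$ covers both the regime of polynomially large $Np_j$, where one wants the two-sided Gaussian tail, and the regime of sub-polynomial $Np_j$ for $j$ near $\log N$, where the lower bound in \eqref{degconc} is vacuous and only a crude upper-tail estimate is needed. Everything else is routine bookkeeping, and the whole argument is the one behind equation (7) of \cite{AFP}.
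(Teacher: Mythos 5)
Your argument is correct, and it is precisely the standard route: pass to the i.i.d.\ truncated-Poisson model via Lemma \ref{lem3}, prove the bounds unconditionally by Bernstein (for \eqref{degconc}) and a union/first-moment bound (for \eqref{degconc1}), and then strip off the event $\{\sum Z_i=2M\}$ by dividing by its $\Theta(N^{-1/2})$ probability from \eqref{local1}, with $\s^2=\Theta(\l)=\Theta(1)$ keeping that denominator polynomial. The paper itself gives no proof and simply defers to equation (7) of \cite{AFP}; what you have written is a faithful reconstruction of that argument, and the two-case analysis of the window $t_j=(1+(Np_j)^{1/2})\log^2N$ (large versus small $Np_j$, with the lower bound vacuous in the latter regime) is exactly the point that makes the single uniform statement work across the whole range $2\leq j\leq\log N$. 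The only nit: in your Case 1 the exponent is actually $\geq\tfrac38\log^4N$ rather than $\tfrac38\log^2N$, but since you only need $N^{-\omega(1)}$ this is harmless.
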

We can now show $G_\bx$, $\bx\in [n]^{2m}_{\delta\geq2}$ is a good model for $G_{n,m}^{\d\geq 2}$. For this we only need to show now that
\beq{simpx}{
\Pr(G_\bx\text{ is simple})=\Omega(1).
 }
Again, this follows as in \cite{AFP}. 

Given a tree $H$ with $k$ vertices of degrees $z_1,z_2,...,z_k$ and a fixed vertex $v$ we see that if $\r_H$ is the probability that $G(N_{k_1}(v))=H$  in $G_{\bx}$ then we have
\newpage
\begin{align}
\r_{H,o_H}&\approx \binom{N}{k-1} \frac{(k-1)!}{Aut(H,o_H)}\times\nonumber\\
& \hspace{.5in}\sum_{D=2k-2}^\infty\  \sum_{\substack{d_1\geq z_1,\ldots,d_k\geq z_k\\d_1+\cdots+d_k=D}} \prod_{i=1}^k\frac{\l^{d_i}}{d_i!f_2(\l)}\cdot\binom{M}{k-1}2^{k-1}(k-1)!\cdot\prod_{i=1}^k \frac{d_i!}{(d_i-z_i)!}\frac{1}{(2M)^{2k-2}}\label{sigma1}\\
&\approx\bfrac{N}{2M}^{k-1} \frac{\lambda^{2k-2}}{Aut(H,o_H)f_2(\l)^k}  \sum_{D=2k-2}^\infty\  \sum_{\substack{d_1\geq z_1,\ldots,d_k\geq z_k\\d_1+\cdots+d_k=D}} \prod_{i=1}^k\frac{\l^{d_i-z_i}}{(d_i-z_i)!} \nonumber\\
&=\bfrac{N}{2M}^{k-1}\frac{\lambda^{2k-2}}{Aut(H,o_H)f_2(\l)^k}   \sum_{D=2k-2}^\infty\frac{(k\l)^{D-2(k-1)}}{(D-2(k-1))!}\label{sigma2}\\
&\approx \frac{1}{Aut(H,o_H)} \bfrac{N}{2M}^{k-1} \lambda^{2k-2} \frac{e^{k\l}}{f_2(\l)^k}.\nonumber
\end{align}
{\bf Explanation for \eqref{sigma1}:} We use \eqref{local3} to obtain the probability that the degrees of $[k]$ are $d_1,\ldots,d_k$. This explains the product $\prod_{i=1}^k\frac{\l^{d_i}}{d_i!f_2(\l)}$. Implicit here is that $d_i=O(\log n)$, from \eqref{degconc1}. The contribution to the degree sum $D$ for $D\geq 2k\log n$ can therefore be shown to be negligible. We use the fact that $k$ is small to argue that w.h.p. $H$ is induced. We choose the vertices, other than $v$ in $\binom{N}{k-1}$ ways and then $\frac{(k-1)!}{Aut(H,o_H)}$ counts the number of copies of $H$ in $K_k$. We then choose the place in the sequence to put these edges in $\binom{M}{k-1}2^{k-1}(k-1)!$ ways. Finally note that the probability the $z_i$ occurrences of the $i$th vertex are as claimed is asymptotically equal to $\frac{d_i(d_i-1)\cdots (d_i-z_i+1)}{(2M)^{z_i}}$ and this explains the factor $\prod_{i=1}^k \frac{d_i!}{(d_i-z_i)!}\frac{1}{(2M)^{2k-2}}$.

{\bf Explanation for \eqref{sigma2}:} We use the identity 
\[
\sum_{\substack{d_1,\ldots,d_k\\d_1+\cdots+d_k=D}}\frac{D!}{d_1!\cdots d_k!}=k^D.
\]
It only remains to verify \eqref{nH}. It follows from the above that $\E(\n(H)\mid M,N)=\Omega(N)$. We first condition on a degree sequence \bx\ satisfying \eqref{degconc}. We then work in the associated configuration model. We can generate a configuration $F$ as a permutation of the multi-set $\set{d_i\times i:i\in [N]}$. Interchanging two elements in a permutation can only change $\n(H)$ by $O(1)$. We can therefore apply Azuma's inequality to show that 
\beq{Azuma}{
\Pr(|\n(H)-\E(\n(H))|\geq n^{3/5})=O(e^{-\Omega(n^{1/5})}).
} 
(Specifically we can use Lemma 11 of Frieze and Pittel \cite{FP1} or Section 3.2 of McDiarmid \cite{McD}.) This verifies \eqref{nH}.
\section{Summary and open problems}
We have derived an expression for the length of the longest path in $G_{n,p}$ that holds for large $c$ w.h.p. It would be interesting to have a more algebraic expression. Also, we could no doubt make this proof algorithmic, by using the arguments of Frieze and Haber \cite{FH}. It would be more interesting to do the analysis for small $c>1$. Applying the coupling of McDiarmid \cite{McD1} we see that the random digraph $D_{n,p},p=c/n$ contains a path at least as long as that given by the R.H.S. of \eqref{sizeC1}. It should be possible to improve this, just as Krivelevich, Lubetzky and Sudakov \cite{KLS1} did for the earlier result of \cite{Fpath}.


\begin{thebibliography}{99}
\bibitem{AKS}  M. Ajtai, J. Koml\'{o}s and E. Szemer\'{e}di. The longest path in a random graph, {\em Combinatorica} 1 (1981) 1-12.
\bibitem{AFP} J. Aronson, A.M. Frieze and B.G. Pittel, Maximum matchings in sparse random graphs: Karp-Sipser re-visited, {\em Random Structures and Algorithms} 12 (1998) 111-178.
\bibitem{BGT} M. Bayati, D. Gamarnik and P. Tetali, Combinatorial approach to the interpolation method and scaling limits in sparse random graphs, {\em The Annals of Probability} 41 (2013) 4080-4115.
\bibitem{Bopath1} B. Bollob\'as, Long paths in sparse random graphs, {\em Combinatorica} 2 (1982) 223-228.
\bibitem{B2} B. Bollob\'as, {\em A probabilistic proof of an asymptotic formula for the number of labeled regular graphs}, European Journal on Combinatorics 1 (1980) 311-316.
\bibitem{BCFF} B. Bollob\'as, C. Cooper, T.I.Fenner and A.M.Frieze, On Hamilton cycles in sparse random graphs with minimum degree at least $k$, {\em Journal of Graph Theory} 34 (2000) 42-59.
\bibitem{FPitt}  A.M.Frieze and B. Pittel. On a sparse random graph with minimum degree three: Likely Posa's sets are large, {\em Journal of Combinatorics} 4 (2013) 123-156.
[Co-author: ]
\bibitem{BFF2} B.Bollob\'as, T.I.Fenner and A.M.Frieze, Long cycles in sparse random graphs, {\em Graph theory and combinatorics, Proceedings of Cambridge Combinatorial Conference in honour of Paul Erdos} (1984) 59-64.
\bibitem{BollFr} B.Bollob\'as and A.M.Frieze, On matchings and hamiltonian cycles in random graphs, {\em Annals of Discrete Mathematics} 28 (1985) 23-46.
\bibitem{Ch} V. Chv\'atal, Almost all graphs with 1.44$n$ edges are 3-colourable, {\em Random Structures and Algorithms} 2 (1991) 11-28.
\bibitem{Erdos1} P.Erd\H{o}s, Problems and results on finite and infinite graphs , in {\em Proceedings Symposium Prague} 1974.
\bibitem{ER} P.Erd\H{o}s and A. R\'enyi, On the evolution of random graphs, {\em Publ. Math. Inst. Hungar. Acad. Sci.} 5 (1960) 17-61.
\bibitem{delgk}  B. Bollob\'as, C. Cooper, T. Fenner and A.M. Frieze, On Hamilton cycles in sparse random graphs with minimum degree at least $k$, {\em  Journal of Graph Theory} 34 (2000) 42-59.
\bibitem{FF} T.I.Fenner and A.M.Frieze, On the existence of hamiltonian cycles in a class of random graphs, {\em Discrete Mathematics} 45 (1983) 301-305.
\bibitem{FF1} T.I.Fenner and A.M.Frieze, Hamiltonian cycles in random regular graphs, {\em Journal of Combinatorial Theory, Series B} 40 (1984) 103-112.
\bibitem{Fpath}  A.M. Frieze, On large matchings and cycles in sparse random graphs, {\em Discrete Mathematics} 59 (1986) 243-256.
\bibitem{hamd3} A.M. Frieze, An almost linear time algorithm for finding Hamilton cycles in sparse random graphs with minimum degree at least three, {\em Random Structures and Algorithms} 47 (2015) 73-98.
\bibitem{FH} A.M. Frieze and S. Haber, An almost linear time algorithm for finding Hamilton cycles in sparse random graphs with minimum degree at least three, {\em Random Structures and Algorithms} 47 (2015) 73-98.
\bibitem{FK} A.M. Frieze and M. Karo\'nski, Introduction to Random Graphs, Cambridge University Press. 2015.
\bibitem{KLS1} M. Krivelevich, E. Lubetzky and B. Sudakov, Longest cycles in sparse random digraphs, 
{\em Random Structures and Algorithms} 43 (2013) 1-15. 
\bibitem{FP1}  A.M. Frieze and B. Pittel, Perfect matchings in random graphs with prescribed minimal degree,
{\em Trends in Mathematics}, Birkhauser Verlag, Basel (2004) 95-132.
\bibitem{McD1} C. McDiarmid, Clutter percolation and random graphs, {\em Mathematical Programming} 13 (1980) 17-25.
\bibitem{McD} C. McDiarmid, Concentration, {\em Probabilistic Methods for Algorithmic Discrete Mathematics}
(M. Habib, C. McDiarmid, J. Ramirez-Alfonsin, B. Reed eds.), Springer, Berlin (1998) 1--46.
\bibitem{Suen} S. Suen, On large induced trees and long induced paths in sparse random graphs, {\em Journal of Combinatorial Theory B} 56 (1992) 250-262.
\bibitem{Vega} W. de la Vega, Long paths in random graphs, {\em Studia Scient. Math. Hungar.} 14 (1979) 335-340.
\end{thebibliography}
\end{document}